\numberwithin{equation}{section}
\newtheorem{theorem}{Theorem}[section]
\newtheorem{proposition}[theorem]{Proposition}
\newtheorem{corollary}[theorem]{Corollary}
\newtheorem{lemma}[theorem]{Lemma}
\newtheorem{remark}[theorem]{Remark}
\newtheorem{example}[theorem]{Example}
\newcommand{\cali}[1]{\mathscr{#1}}
\newcommand{\supp}{{\rm Supp}}
\newcommand{\ddc}{dd^c}
\newcommand{\id}{{\rm id}}
\newcommand{\C}{\mathbb{C}}
\newcommand{\N}{\mathbb{N}}
\newcommand{\R}{\mathbb{R}}
\renewcommand\P{\mathbb{P}}
\title{\bf  Lelong numbers of currents of full mass intersection}
\providecommand{\keywords}[1]{\textbf{\textit{Keywords:}} #1}
\providecommand{\subject}[1]{\textbf{\textit{Mathematics Subject Classification 2010:}} #1}
\author{Duc-Viet Vu}
\newcommand{\Addresses}{{
		\bigskip
		\footnotesize
		\textsc{Duc-Viet Vu, University of Cologne,  Department of Mathematics and Computer Science, Division of Mathematics, Weyertal 86-90, 50931 K\"oln,  Germany}
		\noindent
		\par\nopagebreak
		\noindent
		\textit{E-mail address}: \texttt{vuviet@math.uni-koeln.de}	
}}
\date{\emph{In memory of Nessim Sibony}}
\begin{document}
\maketitle
\begin{abstract} We study Lelong numbers of currents of full mass intersection on a compact K\"ahler manifold in a mixed setting. Our main theorems cover some recent results due to Darvas-Di Nezza-Lu. The key ingredient in our approach is a new notion of products of pseudoeffective $(1,1)$-classes which captures some ``pluripolar part" of the ``total intersection" of given pseudoeffective $(1,1)$-classes. 
\end{abstract}

\noindent
\keywords {closed positive current},  {relative non-pluripolar product}, {full mass intersection}, {Lelong number}.
\\

\noindent
\subject{32U15}, {32Q15}.

\section{Introduction}

Let $X$ be a compact K\"ahler manifold of dimension $n$. For every closed positive current $S$ on $X$, we denote by $\{S\}$ its cohomology class.  For cohomology $(q,q)$-classes $\alpha$ and $\beta$ on $X$, we write $\alpha \le \beta$ if $\beta- \alpha$ can be represented by a closed positive $(q,q)$-current.
    
Let $\alpha_1,\ldots, \alpha_m$ be pseudoeffective $(1,1)$-classes, where $1 \le m \le n$.  Let $T_j$ and $T'_j$ be  closed positive $(1,1)$-currents  in $\alpha_j$  for $1 \le j \le m$ such that $T_j$ is more singular than $T'_j$, \emph{i.e,} potentials of $T_j$ is smaller  than those of $T'_j$ modulo an additive constant.  By monotonicity of non-pluripolar products (see \cite[Theorem 1.1]{Viet-generalized-nonpluri} and also \cite{BEGZ,Lu-Darvas-DiNezza-mono,WittNystrom-mono}), there holds
\begin{align}\label{ine-mono-relativenopluri}
\{\langle T_1 \wedge \cdots \wedge T_m \rangle \} \le \{\langle T_1' \wedge \cdots \wedge T_m' \rangle \}.
\end{align}
We refer to the beginning of Section \ref{sec-positiveproduct} for a brief recap of non-pluripolar products. 

We are interested in comparing the singularity types of $T_j$ and  $T'_j$ when the equality in (\ref{ine-mono-relativenopluri}) occurs. Given the generality of the problem, it is desirable to formulate it in a more concrete way. In what follows, we focus on the important setting where $T_1, \ldots, T_m$ are of full mass intersection (\emph{i.e,} $T'_j$'s have minimal singularities in their cohomology classes).

Let us recall that  $T_1,\ldots, T_m$ are said to be  of \emph{full mass intersection} if the equality  in (\ref{ine-mono-relativenopluri}) occurs for $T'_j$ to be a current with minimal singularities $T_{j,\min}$ in $\alpha_j$ for $1 \le j \le m$. This is independent of the choice of $T_{j,\min}$.  The last notion has played an important role in complex geometry, for example, see \cite{Ahn-NC,BEGZ,Darvas_book,Lu-Darvas-DiNezza-logconcave,DinhSibony_pullback,Koziarz-DucManh,Viet-convexity-weightedclass,WittNystrom-duality}. We also notice that  a connection of the notion of full mass intersection with the theory of density currents (see \cite{Dinh_Sibony_density}) was established in \cite{Viet-density-nonpluripolar}, see also \cite{VietTuanLucas}.  

One of the most basic objects to measure the singularity of a current is the notion of Lelong numbers. We refer to \cite{Demailly_ag} for its basic properties. Hence, the purpose of this paper is to compare the Lelong numbers of $T_j$ and $T_{j,\min}$ when $T_1, \ldots,T_m$ are of full mass intersection.  
 To go into details, we need some notions. 

Let $S$ be a closed positive current  on $X$ and $x$ be a point in $X$. Denote by $\nu(S,x)$ the Lelong number of $S$ at $x$. One can compute  $\nu(S,x)$ as follows. We write $S= \ddc \psi$ for some psh function $\psi$ defined on an open neighborhood $U$ of $x$ such that $U$ is a local chart of $X$ which we identify with an open subset in $\C^n$ and the point $x$ corresponds to the origin in $\C^n$. Then we have  
$$\nu(S,x)= \max\{\gamma \in \R_{\ge 0}: \psi(z) \le \gamma\log |z| + O(1) \, \text{near } 0\},$$
see \cite[Chapter III]{Demailly_ag}. Let $V$ be  an irreducible analytic subset of $X$. By Siu's analytic semi-continuity of Lelong numbers (\cite{Demailly_ag,Siu}), for every $x\in V$ outside some proper analytic subset of $V$, we have 
$$\nu(S,x)= \min_{x' \in V} \nu(S,x').$$
The last number is called \emph{the generic Lelong number of $S$ along $V$} and is denoted by $\nu(S,V)$. 
  
Let $\alpha$ be a pseudoeffective $(1,1)$-class on $X$. Following \cite{Demailly_analyticmethod}, we  recall that $\alpha$ is said to be \emph{big} if there is a K\"ahler current in $\alpha$, \emph{i.e,} there is a closed positive current $T$ in $\alpha$ such that $T \ge \omega$ for some K\"ahler form $\omega$ on $X$.  Let $T_{\alpha,\min}$ be a current with minimal singularities in $\alpha$ (see \cite[page 41-42]{Demailly_analyticmethod} for definition). We denote by $\nu(\alpha,V)$ the generic Lelong number of $T_{\alpha,\min}$  along $V$. This number is independent of the choice of $T_{\alpha,\min}$. It is clear that for every current $S\in \alpha$, we have $\nu(S,V) \ge \nu(\alpha,V)$.    Here is our first main result.

\begin{theorem}\label{the-main} Let $1\le m \le n$ be an integer. Let $\alpha_1,\ldots, \alpha_m$ be big cohomology classes in $X$ and  let $T_j$ be   a closed positive $(1,1)$-currents  in $\alpha_j$ for $1\le j \le m$. Let $V$ be a proper irreducible analytic subset of $X$ of dimension $\ge n-m$. Assume that  $T_1,\ldots,T_m$ are of full mass intersection. Then there exists an index $1\le j \le m$ such that 
\begin{align}\label{eq-genericLelongbangnhau}
\nu(T_j,V) = \nu(\alpha_j,V).
\end{align}
\end{theorem}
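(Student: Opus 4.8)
The plan is to argue by contradiction: assume $\nu(T_j,V)>\nu(\alpha_j,V)$ for every $j\in\{1,\dots,m\}$ and show that $T_1,\dots,T_m$ cannot be of full mass intersection. A first step is to reduce to the case $\dim V=n-m$. Fix a K\"ahler form $\omega$ on $X$. Since $\langle T_1\wedge\cdots\wedge T_m\rangle$ is a closed positive $(m,m)$-current and, by (\ref{ine-mono-relativenopluri}), $\{\langle T_{1,\min}\wedge\cdots\wedge T_{m,\min}\rangle\}-\{\langle T_1\wedge\cdots\wedge T_m\rangle\}$ is a pseudoeffective $(m,m)$-class, which vanishes as soon as its mass against $\omega^{n-m}$ vanishes, full mass intersection is equivalent to the single numerical equality $\int_X\langle T_1\wedge\cdots\wedge T_m\rangle\wedge\omega^{n-m}=\int_X\langle T_{1,\min}\wedge\cdots\wedge T_{m,\min}\rangle\wedge\omega^{n-m}$. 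Combined with the monotonicity in (\ref{ine-mono-relativenopluri}) and the associativity of non-pluripolar products, this implies that full mass intersection of $T_1,\dots,T_m$ passes to every subfamily; hence, writing $k:=n-\dim V\le m$, we may replace $m$ by $k$ and $T_1,\dots,T_m$ by $T_1,\dots,T_k$, so from now on $\dim V=n-m$, i.e. $V$ is a proper irreducible analytic set of codimension exactly $m$.

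The core of the argument is a comparison of ``total intersections''. The tool is a canonically attached closed positive $(m,m)$-current $\langle\!\langle R_1\wedge\cdots\wedge R_m\rangle\!\rangle$, defined for closed positive $(1,1)$-currents $R_1,\dots,R_m$, which carries the whole intersection including its pluripolar contributions: it has cohomology class $\prod_j\{R_j\}$ regardless of the $R_j$, it dominates the non-pluripolar product $\langle R_1\wedge\cdots\wedge R_m\rangle$, and it satisfies the Demailly-type mixed Lelong number bound $\nu(\langle\!\langle R_1\wedge\cdots\wedge R_m\rangle\!\rangle,V)\ge\prod_j\nu(R_j,V)$. Applying this to the tuple $(T_1,\dots,T_m)$ and to the minimal-singularity tuple $(T_{1,\min},\dots,T_{m,\min})$, the two ``defects''
\[
D:=\langle\!\langle T_1\wedge\cdots\wedge T_m\rangle\!\rangle-\langle T_1\wedge\cdots\wedge T_m\rangle,\qquad
D_{\min}:=\langle\!\langle T_{1,\min}\wedge\cdots\wedge T_{m,\min}\rangle\!\rangle-\langle T_{1,\min}\wedge\cdots\wedge T_{m,\min}\rangle
\]
are closed positive $(m,m)$-currents, and since both total intersections have class $\prod_j\alpha_j$ we get $\{D\}-\{D_{\min}\}=\{\langle T_{1,\min}\wedge\cdots\rangle\}-\{\langle T_1\wedge\cdots\rangle\}$. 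Under the full mass hypothesis the right-hand side is a pseudoeffective class with zero mass against $\omega^{n-m}$, hence $\{D\}=\{D_{\min}\}$; in particular $\int_X D\wedge\omega^{n-m}=\int_X D_{\min}\wedge\omega^{n-m}$.

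It remains to compare $D$ and $D_{\min}$ near $V$. Because non-pluripolar products carry no mass on the $(n-m)$-dimensional pluripolar set $V$, we have $\nu(\langle T_1\wedge\cdots\wedge T_m\rangle,V)=\nu(\langle T_{1,\min}\wedge\cdots\rangle,V)=0$, so by additivity of Lelong numbers $\nu(D,V)=\nu(\langle\!\langle T_1\wedge\cdots\wedge T_m\rangle\!\rangle,V)\ge\prod_j\nu(T_j,V)$. On the other hand, for the minimal tuple one must establish $\nu(D_{\min},V)\le\prod_j\nu(\alpha_j,V)$ and, crucially, that the equality $\int_X D\wedge\omega^{n-m}=\int_X D_{\min}\wedge\omega^{n-m}$ together with $D,D_{\min}\ge 0$ and the monotonicity of the total-intersection construction forces $\nu(D,V)\le\nu(D_{\min},V)$ (morally: extra Lelong mass of $D$ along $V$ must be paid for by a loss of mass of $D$ away from $V$, which monotonicity forbids when $T_{j,\min}$ has minimal singularities). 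Granting this, $\prod_j\nu(T_j,V)\le\prod_j\nu(\alpha_j,V)$, which together with $\nu(T_j,V)\ge\nu(\alpha_j,V)\ge0$ for all $j$ contradicts the assumption $\nu(T_j,V)>\nu(\alpha_j,V)$ for all $j$. (When it is convenient, one may make the local analysis at $V$ explicit on a modification $\pi\colon\widetilde X\to X$ log-resolving $V$, with exceptional divisor $E$ over $V$ and $\pi|_E$ generically a $\P^{m-1}$-bundle: there $\nu(\pi^*T_j,E)=\nu(T_j,V)$, $\pi^*T_{j,\min}$ is again of minimal singularities, non-pluripolar products commute with $\pi_*$ and ignore the divisorial parts along $E$, and the above becomes a statement about the currents $\pi^*T_j-\nu(T_j,V)[E]$.)

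The main obstacle is precisely the construction and fine control of the total-intersection product with the properties used above: making it well defined with the correct cohomology class $\prod_j\alpha_j$ for arbitrary, possibly very singular, pseudoeffective classes; its monotonicity and compatibility with modifications (which should mirror the results quoted around (\ref{ine-mono-relativenopluri})); and, above all, the minimality of the defect for minimal-singularity representatives and the resulting transfer of the cohomological identity $\{D\}=\{D_{\min}\}$ to the Lelong numbers along $V$. This transfer — turning equality of classes of the pluripolar parts into an inequality of their generic Lelong numbers along $V$ — is the genuinely new point; I expect it to be handled by a relative non-pluripolar product formalism in the spirit of \cite{Viet-generalized-nonpluri}, together with a local analysis near $E$ via comparison with conical currents and King-type formulas.
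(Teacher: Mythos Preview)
Your proposal is a strategy outline rather than a proof, and the central object you rely on does not exist. You postulate a ``total intersection'' $\langle\!\langle R_1\wedge\cdots\wedge R_m\rangle\!\rangle$ which is a closed positive current in the cup-product class $\prod_j\{R_j\}$, dominates the non-pluripolar product, and satisfies a Demailly-type Lelong bound. For general closed positive $(1,1)$-currents no such canonical product is known; indeed the impossibility of defining wedge products in the full cup-product class while remaining positive is precisely what forces one to work with non-pluripolar products (which lose mass). You acknowledge this at the end, but this means the whole scheme from the definition of $D,D_{\min}$ onward is conditional on a construction that is not available. Even granting the construction, two further steps are unjustified: the upper bound $\nu(D_{\min},V)\le\prod_j\nu(\alpha_j,V)$ has no reason to hold (the defect of the minimal representatives can concentrate more along $V$ than the naive product of Lelong numbers), and the passage from $\{D\}=\{D_{\min}\}$ to $\nu(D,V)\le\nu(D_{\min},V)$ is false in general for positive currents of equal mass --- your ``morally'' paragraph is where the real work would lie, and you give no mechanism for it.

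The paper's proof avoids all of this by never invoking a full-class product. After a preliminary reduction (subtracting $T_{\alpha_j}=\bold{1}_{I_{\alpha_j}}T_{j,\min}$ so that $T_{\alpha_j}=0$) and a desingularisation of $V$, one blows up along $V$, writes $\sigma^*T_j=\lambda_{T_j}[\widehat V]+\eta_j$ and similarly for $T_{j,\min}$, and works with the \emph{relative} non-pluripolar product $\langle\,\cdot\,\dot\wedge\,\beta\rangle$ of classes against $\beta=\{[\widehat V]\}$, which is well defined and monotone by Theorem~\ref{th-mono-current11}. The contradiction is obtained not by comparing Lelong numbers of two abstract defects, but by an explicit chain of inequalities: using Lemma~\ref{le-dongkahlertronfeta} one constructs K\"ahler currents in $\gamma_{j,\min}-c_\delta\beta$ whose polar loci miss $\widehat V$, and then super-additivity and monotonicity of the product $\langle\,\cdot\,\dot\wedge\,\cdot\,\rangle$ (Proposition~\ref{pro-productclasss}) yield
\[
\int_X\langle\wedge_j T_j\rangle\wedge\Phi \le \int_X\langle\wedge_j T_{j,\min}\rangle\wedge\Phi - M_1\cdots M_m\int_{\widehat V}\widehat\omega^{m-1}\wedge\sigma^*\Phi,
\]
with each $M_j>0$ proportional to $b_j=\lambda_{T_j}-\lambda_{T_{j,\min}}$. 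For $\Phi=\omega^{n-m}$ the last integral is positive since $\dim V\ge n-m$, contradicting full mass intersection. In short, the paper replaces your hypothetical total product by a concrete, well-controlled relative product and extracts the strict loss of mass from an explicit integral on the exceptional divisor, rather than from any Lelong-number comparison between defects.
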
   

We note that when $\alpha_1,\ldots, \alpha_m$ are K\"ahler, Theorem  \ref{the-main} was proved in  \cite[Theorem 1.2]{Viet-generalized-nonpluri}; see also the discussion after Corollary \ref{cor-knownbignef} below. The proof presented there is not applicable in the setting of Theorem \ref{the-main}.

When $\dim V= n-m$, the above result is optimal because in general, it might happen that there is only one index $j$ satisfying (\ref{eq-genericLelongbangnhau}); see Example \ref{ex-optimallelong}. However, motivated from the K\"ahler case, we wonder whether it is true that the number of $1 \le j \le m$ such that $\nu(T_j,V)= \nu(\alpha_j,V)$ is at least $\dim V - (n-m)+1$ (recall $V \subsetneq X$).

In the case where $m=n$, our above result can be improved quantitatively as follows. 

\begin{theorem} \label{the-main3} Let $\cali{B}_0$ be a closed cone in $H^{1,1}(X, \R)$ which is contained  in the cone of big $(1,1)$-classes of $X$.  Then, there exists a constant $C>0$ such that for every $x_0 \in X$,  every $\alpha_j \in \cali{B}_0$ and  every closed positive $(1,1)$-current $T_j \in \alpha_j$ for $1\le j \le n$, we have  
\begin{align}\label{ine-truonghopncaliB0}
\int_X \big(\langle \wedge_{j=1}^n\alpha_j \rangle- \{\langle \wedge_{j=1}^n T_j \rangle\}\big) \ge C\big(\nu(T_1, x_0)- \nu(\alpha_1, x_0)\big)\cdots \big(\nu(T_n, x_0)- \nu(\alpha_n, x_0)\big).
\end{align}
\end{theorem}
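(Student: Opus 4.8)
\emph{Plan.} The inequality (\ref{ine-truonghopncaliB0}) is homogeneous: replacing $(\alpha_j,T_j)$ by $(\lambda_j\alpha_j,\lambda_jT_j)$ with $\lambda_j>0$ multiplies both sides by $\lambda_1\cdots\lambda_n$, and $\lambda_j\alpha_j$ stays in $\cali{B}_0$. Fixing a norm on $H^{1,1}(X,\R)$, I may therefore assume each $\alpha_j$ lies in the compact set $K:=\cali{B}_0\cap\{\|\cdot\|=1\}$, which is contained in the open big cone; the constant will be $C=C(K)$. Since $\nu(T_j,x_0)\ge\nu(\alpha_j,x_0)$ always, and if equality holds for some $j$ the right side of (\ref{ine-truonghopncaliB0}) vanishes while the left side is $\ge0$ by (\ref{ine-mono-relativenopluri}) applied with $T'_j=T_{j,\min}$, I may also assume $\gamma_j:=\nu(T_j,x_0)-\nu(\alpha_j,x_0)>0$, hence $a_j:=\nu(T_j,x_0)>0$, for every $j$.

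The idea is to pass to the blow-up $\pi\colon\widehat X\to X$ at $x_0$, with exceptional divisor $E\cong\P^{n-1}$, and to extract the product $\gamma_1\cdots\gamma_n$ from $\{E\}^n=(-1)^{n-1}$. Recall that $\int_X\langle\wedge_{j=1}^n\alpha_j\rangle$ is the positive intersection number $\langle\alpha_1\cdots\alpha_n\rangle=\int_X\langle\wedge_jT_{j,\min}\rangle$; being a bimeromorphic invariant it equals $\langle\wedge_j\pi^*\alpha_j\rangle$. Non-pluripolar products commute with push-forward by modifications (\cite{BEGZ}), so $\int_X\langle\wedge_jT_j\rangle=\int_{\widehat X}\langle\wedge_j\pi^*T_j\rangle$. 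Writing the Siu decomposition $\pi^*T_j=a_j[E]+R_j$ with $R_j\ge0$ closed positive in the pseudoeffective class $\widehat\alpha_j:=\pi^*\alpha_j-a_j\{E\}$: since every $a_j>0$, the local potentials of all $\pi^*T_j$ are $\equiv-\infty$ on $E$, so $\langle\wedge_j\pi^*T_j\rangle$ puts no mass on $E$; as it coincides with $\langle\wedge_jR_j\rangle$ off $E$ and $R_j\in\widehat\alpha_j$, monotonicity (\ref{ine-mono-relativenopluri}) on $\widehat X$ gives $\int_{\widehat X}\langle\wedge_j\pi^*T_j\rangle\le\int_{\widehat X}\langle\wedge_jR_j\rangle\le\langle\wedge_j\widehat\alpha_j\rangle$. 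Hence the left side of (\ref{ine-truonghopncaliB0}) is at least
\[
\langle\pi^*\alpha_1\wedge\cdots\wedge\pi^*\alpha_n\rangle-\big\langle(\pi^*\alpha_1-a_1\{E\})\wedge\cdots\wedge(\pi^*\alpha_n-a_n\{E\})\big\rangle .
\]

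It remains to bound this difference of positive intersection numbers below by $C\gamma_1\cdots\gamma_n$. For $s_j$ in the pseudoeffective interval $[\,\nu(\pi^*\alpha_j,E),\tau\,]$, uniformly bounded over $K$ because $\tau=\tau(K)$ bounds $\sup_{T\in\alpha_j}\nu(T,x_0)$, put $F(s_1,\dots,s_n):=\langle(\pi^*\alpha_1-s_1\{E\})\wedge\cdots\wedge(\pi^*\alpha_n-s_n\{E\})\rangle$. Then $F$ is non-increasing in each $s_j$; on the wall $\{s_j=\nu(\pi^*\alpha_j,E)\ \forall j\}$ one has $F=\langle\wedge_j\pi^*\alpha_j\rangle$ (peeling off the fixed $E$-part of each $\widehat\alpha_j$ does not change the positive intersection — the ``blow-up formula'' via divisorial Zariski decomposition); and, using $\pi_*(\{E\}^k)=0$ for $1\le k\le n-1$ while $\{E\}^n=(-1)^{n-1}$, the one-sided mixed derivative of $F$ along that wall equals $-\,\partial_{s_1}\!\cdots\partial_{s_n}(s_1\cdots s_n)=-1$. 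A compactness argument over $K$ and over the $s$-range then converts this correct leading order into a genuine lower bound $\langle\wedge_j\pi^*\alpha_j\rangle-F(a_1,\dots,a_n)\ge C\prod_j\big(a_j-\nu(\pi^*\alpha_j,E)\big)$; and since $\nu(\pi^*\alpha_j,E)\le\nu(\alpha_j,x_0)$ (because $\pi^*T_{j,\min}$ has generic Lelong number $\nu(\alpha_j,x_0)$ along $E$), this is $\ge C\prod_j\gamma_j$.

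The hard part is this last step: the positive-intersection product is \emph{not} multilinear, so one cannot simply expand $F$, and justifying that its leading behaviour past the wall is governed by $s_1\cdots s_n$ needs real pluripotential input — I would approximate the $T_j$ and the minimal-singularity currents by currents with analytic singularities (Demailly regularisation), apply Demailly's comparison theorem for Lelong numbers of Monge--Amp\`ere masses on $\widehat X$ near $E$, and pass to the limit, keeping every error term uniform in $\alpha_j\in K$ — this uniformity is exactly why $\cali{B}_0$ is taken to be a \emph{closed} cone inside the big cone. A further technical point, the interaction of $x_0$ with the negative parts of the divisorial Zariski decompositions of the $\alpha_j$, should be removed by an additional blow-up. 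Alternatively, the whole estimate can be repackaged through the relative non-pluripolar product introduced in this paper, whose mass at $x_0$ is precisely the obstruction to full mass intersection there and is $\ge C\gamma_1\cdots\gamma_n$ by the identical comparison principle.
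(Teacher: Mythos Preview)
Your reduction to $\alpha_j\in K:=\cali{B}_0\cap\{\|\cdot\|=1\}$, the blow-up at $x_0$, the Siu decomposition $\pi^*T_j=a_j[E]+R_j$, and the inequality $\int_X\langle\wedge_jT_j\rangle\le\langle\wedge_j(\pi^*\alpha_j-a_j\{E\})\rangle$ are all correct and match the paper's set-up. The identification $F(s^0_1,\dots,s^0_n)=\langle\wedge_j\pi^*\alpha_j\rangle$ at the wall $s^0_j=\nu(\alpha_j,x_0)$ is also right (this is (\ref{eq-Tminclassgamma}) in the paper).

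The gap is the step you yourself flag as ``the hard part''. Your heuristic that the mixed partial $\partial_{s_1}\cdots\partial_{s_n}F$ equals $-1$ at the wall, followed by a ``compactness argument'', does not yield $F(s^0)-F(a)\ge C\prod_j\gamma_j$. First, $F$ is not known to be $C^n$; second, even granting a one-sided mixed derivative at a single boundary point, this gives no control on finite increments without a uniform bound on that derivative over the whole $s$-box, which is essentially the statement to be proved. Super-additivity of the \emph{ordinary} positive product $\langle\cdot\rangle$ goes the wrong way here: telescoping $\pi^*\alpha_j-s^0_j\{E\}=(\pi^*\alpha_j-a_j\{E\})+\gamma_j\{E\}$ produces terms of the form $\langle\{E\}^k\wedge(\text{big classes})\rangle$, and since $\{E\}$ is not big these positive products can vanish --- so the expansion loses exactly the contribution $\prod_j\gamma_j$ you need. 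The suggested fix via Demailly regularisation and comparison of Lelong numbers remains a programme, not a proof.

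The paper closes this gap with the \emph{relative} product $\langle\,\cdot\,\dot\wedge\,\cdot\,\rangle$ of Section~\ref{sec-positiveproduct}. The point is that $\langle\wedge_{j=1}^{m-1}\gamma'_{j,\min}\dot\wedge[\widehat V]\rangle$ retains the mass along $E=\widehat V$ that the ordinary positive product discards; Proposition~\ref{pro-productclasss}~$(ii)$ (super-additivity in the dotted slot) and $(iii)$ then allow one to peel off $c_{\delta_m}\{E\}$ and bound the resulting $E$-term from below by $\prod_jM_j\cdot\int_{\widehat V}\widehat\omega^{n-1}$ using the uniform K\"ahler currents of Lemma~\ref{le-dongkahlertronfeta}. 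This gives the explicit inequality (\ref{ine-danhgiagancuiocungTjMcdelta}), from which (\ref{ine-truonghopncaliB0}) follows directly with $C$ depending only on $\epsilon$, $c_V$ and $\int_{\widehat V}\widehat\omega^{n-1}$, all uniform over $K$ and over $x_0$. Your final sentence points exactly at this mechanism; what is missing is carrying it out.
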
 

The dependence of $C$ on $\cali{B}_0$ is necessary, see Example \ref{ex-themain3}.  
We have the following direct consequences of Theorem \ref{the-main}.

\begin{corollary} \label{cor-main2} Let $1\le m \le n$ be an integer.  Let $\alpha$ be a big class and let $T \in \alpha$ be a closed positive $(1,1)$-current so that 
$$\{\langle T^m \rangle\}= \langle \alpha^m \rangle.$$
Let $V$ be an irreducible analytic subset of $X$ of dimension at least $n-m$.  Then there holds
$$\nu(T,V)=\nu(\alpha_j,V).$$
In particular, if $\alpha$ is big and nef, then $T$ has zero Lelong number at a generic point in $V$. 
\end{corollary}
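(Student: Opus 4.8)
The statement is the special case of Theorem \ref{the-main} in which $\alpha_1=\cdots=\alpha_m=\alpha$ and $T_1=\cdots=T_m=T$, so the plan is simply to feed this input into the theorem and then squeeze out the stronger conclusion using the equality of all the relevant classes. First I would record that the hypothesis $\{\langle T^m\rangle\}=\langle\alpha^m\rangle$ is precisely the statement that the $m$-tuple $(T,\dots,T)$ is of full mass intersection in the sense defined before Theorem \ref{the-main} (taking each $T'_j=T_{\alpha,\min}$). Since $V$ has dimension $\ge n-m$ and $V\subsetneq X$ (if $\dim V=n$ there is nothing to prove, as then $V=X$ and $\alpha$ big forces $\nu(\alpha,X)=0=\nu(T,X)$; more precisely one should note that a big class and any positive current in it have generic Lelong number zero along $X$ itself, or just restrict attention to proper $V$), Theorem \ref{the-main} applies and yields an index $j$ with $\nu(T_j,V)=\nu(\alpha_j,V)$. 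But $T_j=T$ and $\alpha_j=\alpha$ for every $j$, so this immediately gives $\nu(T,V)=\nu(\alpha,V)$, which is the asserted equality (the index $j$ on the right-hand side of the displayed formula in the corollary is a typo for $\alpha$).

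For the ``in particular'' clause, I would invoke the standard fact that a big and nef class $\alpha$ has vanishing generic Lelong number along every irreducible analytic subset, i.e. $\nu(\alpha,V)=0$. This is because a nef class carries, for every $\varepsilon>0$, a closed positive current with local potentials bounded below by $-\varepsilon\varphi$ for a fixed potential, hence a current with minimal singularities in $\alpha$ has Lelong number $\le\varepsilon$ at every point for every $\varepsilon$; letting $\varepsilon\to 0$ gives $\nu(\alpha,x)=0$ for all $x$, so $\nu(\alpha,V)=0$. Combining with the equality just proved, $\nu(T,V)=0$, and since by Siu's semicontinuity $\nu(T,x)=\nu(T,V)=0$ for $x$ in a Zariski-dense open subset of $V$, the current $T$ has zero Lelong number at a generic point of $V$.

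I do not expect any genuine obstacle here: the corollary is a direct specialization, and the only points requiring a line of justification are (i) the translation of the cohomological hypothesis into the ``full mass intersection'' language, which is immediate from the definitions, and (ii) the vanishing $\nu(\alpha,V)=0$ for big and nef $\alpha$, which is classical (see \cite{Demailly_ag,Demailly_analyticmethod}). The substance of the argument is entirely contained in Theorem \ref{the-main}; the corollary merely exploits that, when all the data coincide, ``there exists an index $j$'' is as strong as ``for all $j$''.
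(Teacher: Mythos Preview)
Your proposal is correct and is precisely the argument the paper has in mind: the corollary is stated there as a ``direct consequence of Theorem \ref{the-main}'' with no separate proof, and your specialization $\alpha_1=\cdots=\alpha_m=\alpha$, $T_1=\cdots=T_m=T$ together with the observation that ``there exists $j$'' collapses to ``for all $j$'' is exactly the intended reasoning. Your handling of the case $V=X$, the identification of the typo $\alpha_j\to\alpha$, and the standard justification that $\nu(\alpha,V)=0$ for big and nef $\alpha$ are all appropriate and do not deviate from the paper's (implicit) approach.
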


Recall that $\langle \alpha^m \rangle $ is defined to be the cohomology class of $\langle T_{\alpha,\min}^m \rangle$, where $T_{\alpha,\min}$ is a current with minimal singularities in $\alpha$, see Section \ref{sec-positiveproduct} below for details.  Combining Corollary \ref{cor-main2} with results in \cite{Boucksom_l2,Boucksom-Favre-Jonsson}, we recover the following known result. 

\begin{corollary} \label{cor-knownbignef} Let $\theta$ be a smooth closed $(1,1)$-form in a big cohomology class $\alpha$. Let $\varphi$ be a $\theta$-psh function of full Monge-Amp\`ere mass, \emph{i.e,} 
$$\{\langle (\ddc \varphi+ \theta)^n \rangle\}= \langle \alpha^n \rangle.$$
Let $\varphi_{\alpha,\min}$ be a $\theta$-psh function with minimal singularities. Then, we have 
\begin{align} \label{eq-multiidealsheaf}
\mathcal{I}(t\varphi)= \mathcal{I}(t \varphi_{\alpha,\min})
\end{align}
 for every $t>0$, where for every quasi-psh function $\psi$ on $X$, we denote by $\mathcal{I}(\psi)$ the multiplier ideal sheaf associated to $\psi$.  
\end{corollary}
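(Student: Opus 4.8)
The plan is to reduce Corollary~\ref{cor-knownbignef} to Corollary~\ref{cor-main2} applied on every modification of $X$, combined with the (well-known) fact that the multiplier ideal sheaves $\mathcal{I}(t\psi)$ of a quasi-psh function $\psi$ are determined by the generic Lelong numbers of $\psi$ along all prime divisors lying over $X$; this is the valuative/divisorial description of multiplier ideals, resting on Demailly regularization and the log-resolution formula (see \cite{Boucksom_l2,Boucksom-Favre-Jonsson}). Writing $T:=\ddc\varphi+\theta$ and noting that $\langle\alpha^n\rangle=\vol(\alpha)=\int_X\langle T^n\rangle$ by hypothesis, the goal becomes: for every modification $\pi\colon X'\to X$ and every prime divisor $E\subset X'$, the generic Lelong number of $\pi^*\varphi$ along $E$ equals that of $\pi^*\varphi_{\alpha,\min}$ along $E$. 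Since $\theta$ is smooth, generic Lelong numbers of the potentials coincide with those of the corresponding currents, and they scale linearly in $t$, so proving this equality yields \eqref{eq-multiidealsheaf} for all $t>0$.

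First I would fix a modification $\pi\colon X'\to X$, replacing it if necessary by a dominating one with $X'$ compact K\"ahler (harmless, since the divisorial Lelong numbers we need depend only on the underlying valuations, and every divisorial valuation over $X$ is realized on such a model), and establish two transfer statements. One: $\pi^*T$ is still of full Monge-Amp\`ere mass in the big class $\pi^*\alpha$. This should follow from $\vol(\pi^*\alpha)=\vol(\alpha)$ (volume is a bimeromorphic invariant) together with the bimeromorphic invariance of the non-pluripolar mass: $\pi$ is a biholomorphism away from the analytic, hence pluripolar, sets $\mathrm{Exc}(\pi)$ and $\pi(\mathrm{Exc}(\pi))$, while non-pluripolar products put no mass on pluripolar sets and are local and biholomorphically invariant, so $\int_{X'}\langle(\pi^*T)^n\rangle=\int_X\langle T^n\rangle=\vol(\alpha)=\vol(\pi^*\alpha)$. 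Two: $\pi^*\varphi_{\alpha,\min}$ has minimal singularities in $\pi^*\alpha$, which I would deduce from the fact that $\pi^*$ is an order-preserving bijection between $\theta$-psh functions on $X$ and $\pi^*\theta$-psh functions on $X'$ (its inverse being push-forward followed by the standard extension across $\pi(\mathrm{Exc}(\pi))$); consequently the generic Lelong number of $\pi^*\varphi_{\alpha,\min}$ along $E$ is exactly $\nu(\pi^*\alpha,E)$.

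With these in hand, I would apply Corollary~\ref{cor-main2} on $X'$ with $m=n$, the big class $\pi^*\alpha$, the full-mass current $\pi^*T$, and $V=E$ (of dimension $n-1\ge n-n$), obtaining $\nu(\pi^*T,E)=\nu(\pi^*\alpha,E)$, that is,
\[
\nu(\pi^*\varphi,E)=\nu(\pi^*\varphi_{\alpha,\min},E);
\]
since $\pi$ and $E$ are arbitrary, the reduction of the first paragraph concludes the argument. The genuinely nontrivial ingredients here are external to this corollary: the bimeromorphic invariance of the non-pluripolar Monge-Amp\`ere mass (used to carry the full-mass hypothesis up to each $X'$) and the divisorial description of multiplier ideals from \cite{Boucksom_l2,Boucksom-Favre-Jonsson}. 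The main place where care is required in the argument itself is the bookkeeping in the second paragraph — verifying precisely that passing to a modification preserves both the full-mass condition and the minimality of singularities — while the rest is formal once Corollary~\ref{cor-main2} is granted.
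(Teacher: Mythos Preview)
Your proposal is correct and follows the same overall strategy as the paper: pass to an arbitrary modification $\pi\colon X'\to X$, use bimeromorphic invariance of the non-pluripolar Monge--Amp\`ere mass to keep the full-mass hypothesis, apply Corollary~\ref{cor-main2} with $V=E$ a prime divisor, and conclude via the valuative description of multiplier ideals from \cite{Boucksom_l2,Boucksom-Favre-Jonsson}.

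The one substantive difference is in how you handle $\pi^*\varphi_{\alpha,\min}$. You assert that it has minimal singularities in $\pi^*\alpha$, deducing this from the bijection between $\theta$-psh functions on $X$ and $\pi^*\theta$-psh functions on $X'$; this is indeed correct (two quasi-psh functions agreeing outside the exceptional set agree everywhere, so push-forward followed by extension inverts $\pi^*$). The paper, by contrast, explicitly declines to make this claim---it even remarks parenthetically that $\rho^*\varphi_{\alpha,\min}$ ``is not necessarily a quasi-psh function with minimal singularities in $\alpha'$''---and instead observes that $\rho^*(\ddc\varphi_{\alpha,\min}+\theta)$ also has full Monge--Amp\`ere mass in $\rho^*\alpha$, so that Corollary~\ref{cor-main2} can be applied to it \emph{as well}, yielding $\nu(\rho^*\varphi_{\alpha,\min},E)=\nu(\rho^*\alpha,E)$ directly. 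Your route requires the (standard but extra) bijection fact; the paper's route stays entirely within the results already proved and sidesteps the question. Both are valid, and the paper's caution on this point appears to be unnecessary.
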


Corollary \ref{cor-knownbignef} was proved in  \cite{Darvas-kahlerclass,Lu-Darvas-DiNezza-mono,Lu-Darvas-DiNezza-singularitytype} (hence answering a question posed in \cite{DGZ}); see also \cite{GZ-weighted} for the case where $\theta$ is K\"ahler. In fact, \cite{Lu-Darvas-DiNezza-mono} gives a stronger fact which we describe below. For  every closed positive $(1,1)$-current $T'$ with  $\int_X \langle T'^n \rangle >0,$ Theorem 1.3 in \cite{Lu-Darvas-DiNezza-mono} gives a characterization (in terms of certain plurisubharmonic rooftop envelopes) of  potentials of every closed positive $(1,1)$-current $T$ cohomologous to $T'$ such that $T$ is less singular than $T'$ and 
\begin{align*} 
\int_X \langle T^n \rangle = \int_X \langle T'^n \rangle.
\end{align*}
 Consequently,  the multiplier ideal sheafs associated to the potentials of $T$ and $T'$ are the same by arguments from the proof of \cite[Theorem 1.1]{Lu-Darvas-DiNezza-singularitytype}.  Nevertheless, in the present setting of our main results, it is unclear how to formulate  such a characterization  because either $T_1,\ldots, T_m$ can be different or  $m\le n$ (even if one takes $T_1= \cdots =T_m$). In fact, a direct analogue of the envelope characterization given \cite{Lu-Darvas-DiNezza-mono} is not true in our setting when $m\le n$; see the comment after Theorem \ref{the-main} and \cite[Remark 3.3]{Lu-Darvas-DiNezza-mono}.

Let us now have a few comments on our approach.  Due to the above discussions, we present here a \emph{completely new strategy} to the study of singularity of currents of full mass intersection.  We stress that although our main results only involve the usual non-pluripolar products, the notion of relative non-pluripolar products introduced in \cite{Viet-generalized-nonpluri} will play an essential role in our proof. The reason, which will be more clear later, is that relative non-pluripolar products allow us to better control the loss of masses. 

The key ingredient in our proof of Theorem \ref{the-main} is a new notion of products of pseudoeffective classes which was briefly mentioned in \cite[Remark 4.5]{Viet-generalized-nonpluri}. This new product of pseudoeffective classes is bounded from below by  the  positive product introduced in \cite{Boucksom-these,BEGZ}. The feature is that this new product also captures some pluripolar part of ``total intersection" of classes. This explains why we have a better control on masses. 

Theorem \ref{the-main3} is a direct consequence of the proof of Theorem \ref{the-main}.  We underline that our arguments in the proof of Theorem \ref{the-main} are not quantifiable as soon as $\dim V \ge 2$. This is due to the fact that we need to use the blowup along $V$ and the desingularization of $V$ (in case $V$ is singular). Despite of this, we think that it is still reasonable to expect an estimate similar to Theorem \ref{the-main3} in the case where $V$ is of higher dimension.  

Finally, in view of the above discussion of results in  \cite{Lu-Darvas-DiNezza-mono}, one can wonder what should be expected for the equality case of (\ref{ine-mono-relativenopluri}) when $T'_j$'s are not necessarily of minimal singularities. It is not unrealistic to hope that our approach can be extended to this setting. But there are non-trivial obstructions. To single out one: the condition that $T'_j$'s have minimal singularities are needed in our proof of Theorem \ref{the-main} because we will use the fact that there are K\"ahler currents with analytic singularities which are more singular  than $T_j$ for every $j$.

The paper is organized as follows. In Section \ref{sec-positiveproduct}, we present basic properties of relative non-pluripolar products and introduce the above-mentioned notion of products of pseudoeffective classes. Theorems \ref{the-main} and \ref{the-main3} are proved in Section \ref{sec-themain12}.
\\


\noindent
\textbf{Acknowledgments.} We thank Tam\'as Darvas and Tuyen Trung Truong for fruitful discussions.  This research  is supported by a postdoctoral fellowship of the Alexander von Humboldt Foundation. \\

\section{Relative non-pluripolar products} \label{sec-positiveproduct}

We first recall some basic facts about relative non-pluripolar products. This notion was introduced in \cite{Viet-generalized-nonpluri} as a generalization of the usual non-pluripolar products  given in \cite{BT_fine_87,BEGZ,GZ-weighted}. To simplify the presentation, we only consider the compact setting.

Let $X$ be a compact K\"ahler manifold of dimension $n$.  Let $T_1, \ldots, T_m$ be closed positive $(1,1)$-currents on $X$. Let $T$ be  a closed positive current of bi-degree $(p,p)$ on $X$.  By \cite{Viet-generalized-nonpluri}, we can define \emph{the $T$-relative non-pluripolar product} $\langle \wedge_{j=1}^m T_j \dot{\wedge} T\rangle$  in a way similar to that of  the usual non-pluripolar product.  For readers' convenience, we recall how to do it. 

Write $T_j= \ddc u_j+ \theta_j$, where $\theta_j$ is a smooth form and $u_j$ is a $\theta_j$-psh function. Put 
$$R_k:=\bold{1}_{\cap_{j=1}^m \{u_j >-k\}} \wedge_{j=1}^m (\ddc \max\{u_j,-k\} + \theta_j)\wedge T$$
for $k \in \N$.  By the strong quasi-continuity of bounded psh functions (\cite[Theorems 2.4 and 2.9]{Viet-generalized-nonpluri}), we have 
$$R_k= \bold{1}_{\cap_{j=1}^m \{u_j >-k\}} \wedge_{j=1}^m (\ddc \max\{u_j,-l\} + \theta_j)\wedge T$$
for every $l \ge k \ge 1$. A similar equality also holds if we use local potentials of $T_j$ instead of global ones. We can show that  $R_k$ is positive (see \cite[Lemma 3.2]{Viet-generalized-nonpluri}). 

As in \cite{BEGZ}, since $X$ is K\"ahler, one can check  that $R_k$ is of mass bounded uniformly in $k$ and $(R_k)_k$ admits a limit current which is closed as $k \to \infty$. The last limit is denoted by $\langle \wedge_{j=1}^m T_j \dot{\wedge} T\rangle$.  The  last product is, hence,  a well-defined closed positive current of bi-degree $(m+p,m+p)$; and it is  symmetric with respect to $T_1, \ldots, T_m$ and homogeneous. We refer to  \cite[Proposition 3.5]{Viet-generalized-nonpluri} for more properties of relative non-pluripolar products. When $T \equiv 1$, the $T$-relative non-pluripolar product $\langle T_1 \wedge \cdots \wedge T_m \dot{\wedge} T\rangle$ is exactly the non-pluripolar product $\langle T_1 \wedge \cdots \wedge T_m\rangle$ of $T_1,\ldots, T_m$ defined in \cite{BEGZ}.  

Let $\alpha_1,\ldots, \alpha_m$ be pseudoeffective $(1,1)$-classes on $X$. Recall that by using a monotonicity of relative non-pluripolar products (\cite[Theorem 1.1]{Viet-generalized-nonpluri}), we can define the cohomology class $\{\langle \alpha_1 \wedge \ldots \wedge \alpha_m \dot{\wedge} T \rangle \}$ which is the one of the current $\langle \wedge_{j=1}^m T_{j,\min} \dot{\wedge} T \rangle$, where $T_{j,\min}$ is a current with minimal singularities in $\alpha_j$ for $1 \le j \le m$.    When $T$ is the current of integration along $X$, we write $\langle \alpha_1 \wedge \cdots \wedge \alpha_m \rangle$ for $\{\langle \alpha_1\wedge  \cdots \wedge \alpha_m \dot{\wedge} T \rangle\}$.  
By \cite[Proposition 4.6]{Viet-generalized-nonpluri}, the class $\langle \alpha_1\wedge  \cdots \wedge \alpha_m \rangle$ is equal to the positive product of $\alpha_1, \ldots, \alpha_m$ defined in  \cite[Definition 1.17]{BEGZ} provided that  $\alpha_1, \ldots, \alpha_m$ are big. 

In the next paragraph, we are going to introduce a related notion of products of $(1,1)$-classes. This idea was already suggested in \cite{Viet-generalized-nonpluri}. This new notion will play a crucial role in our proof of Theorem \ref{the-main}.   We are interested in the case where $T$ is of bi-degree $(1,1)$.  We recall the following key monotonicity property. 

\begin{theorem} \label{th-mono-current11} (\cite[Remark 4.5]{Viet-generalized-nonpluri}) Let $X$ be  a compact K\"ahler manifold and let $T_1,\ldots, T_m,T$ be closed positive $(1,1)$-currents on $X$.  Let $T'_j$ and $T'$ be closed positive $(1,1)$-currents in the cohomology class of $T_j$ and $T$ respectively such that $T'_j$ is less singular than $T_j$ for $1 \le j \le m$ and $T'$ is less singular than $T$. Then we have 
$$\{\langle T_1 \wedge \cdots \wedge T_m \dot{\wedge} T \rangle \} \le \{\langle T'_1 \wedge \cdots \wedge T'_m \dot{\wedge} T' \rangle\}.$$
\end{theorem}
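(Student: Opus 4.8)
The statement is the relative counterpart of the classical monotonicity of non-pluripolar products, the new feature being that the relative current --- now of bidegree $(1,1)$ --- is allowed to vary as well; this is why it is recorded only as \cite[Remark 4.5]{Viet-generalized-nonpluri}. The plan is to separate the variation of the truncated factors $T_1,\dots,T_m$, which is covered by the monotonicity already available, from the variation of the relative current $T$, which is the genuinely new ingredient and is obtained by reworking the proof of \cite[Theorem 1.1]{Viet-generalized-nonpluri}.

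First I would invoke \cite[Theorem 1.1]{Viet-generalized-nonpluri}, which gives monotonicity of $\{\langle T_1\wedge\cdots\wedge T_m\,\dot\wedge\,S\rangle\}$ in each factor $T_j$ for an arbitrary \emph{fixed} relative current $S$, in particular for an $S$ of bidegree $(1,1)$. Replacing $T_1,\dots,T_m$ by $T_1',\dots,T_m'$ one index at a time, keeping the relative current equal to $T$, reduces the theorem to the special case: given arbitrary closed positive $(1,1)$-currents $S_1,\dots,S_m$ and two closed positive $(1,1)$-currents $T,T'$ in one cohomology class with $T'$ less singular than $T$, we have $\{\langle S_1\wedge\cdots\wedge S_m\,\dot\wedge\,T\rangle\}\le\{\langle S_1\wedge\cdots\wedge S_m\,\dot\wedge\,T'\rangle\}$.

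To prove this I would rerun the argument of \cite[Theorem 1.1]{Viet-generalized-nonpluri} with $T$ (resp.\ $T'$) in the role of the last, \emph{untruncated} factor. Write $S_j=\theta_j+\ddc u_j$ with $u_j$ $\theta_j$-psh, and $T=\tau+\ddc v$, $T'=\tau+\ddc v'$ with $v\le v'$ $\tau$-psh (after normalizing the constant), and recall that $\langle\wedge_jS_j\,\dot\wedge\,T\rangle=\lim_k\mathbf 1_{\cap_j\{u_j>-k\}}\,\Theta_k\wedge T$, where $\Theta_k:=\wedge_j(\theta_j+\ddc\max\{u_j,-k\})$ is a closed positive $(m,m)$-current with locally bounded potentials, and likewise with $T'$. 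Using the plurifine locality and strong quasi-continuity of such products against the fixed extra factor $T$ or $T'$ (\cite[Theorems 2.4 and 2.9]{Viet-generalized-nonpluri}), one reduces --- via the approximations $v_\ell:=\max\{v,v'-\ell\}$, which decrease to $v$ and satisfy $0\le v'-v_\ell\le\ell$ --- to a setting where all the relevant Bedford--Taylor products are of currents with locally bounded potentials; a Stokes/integration-by-parts computation against $\Theta_k\wedge\omega^{n-m-1}$, exactly as in the cited proof but with the extra untruncated factor present, then yields the inequality at the level of the approximants, and letting $k\to\infty$ and $\ell\to\infty$, and using that $\langle\wedge_jS_j\,\dot\wedge\,T\rangle$ is by definition the (possibly mass-losing) limit of the currents $\mathbf 1_{\cap_j\{u_j>-k\}}\Theta_k\wedge T$, gives the claim.

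The hard part is precisely this last step, i.e.\ the monotonicity in the relative slot. Because $T$ is not truncated, $\langle S_1\wedge\cdots\wedge S_m\,\dot\wedge\,T\rangle$ dominates, in general strictly, the fully symmetric product $\langle S_1\wedge\cdots\wedge S_m\wedge T\rangle$, so one cannot simply move $T$ into a truncated position and quote symmetry together with the classical monotonicity; and the associativity identity $\langle S_1\wedge\cdots\wedge S_m\,\dot\wedge\,T\rangle=\langle S_1\wedge\cdots\wedge S_{m-1}\,\dot\wedge\,\langle S_m\,\dot\wedge\,T\rangle\rangle$ is of no help either, since varying $T$ feeds into the relative current of every product in the resulting tower and the reduction becomes circular. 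So the monotonicity in the relative slot must be obtained by genuinely reworking the proof of \cite[Theorem 1.1]{Viet-generalized-nonpluri} rather than deduced formally from it.
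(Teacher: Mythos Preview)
Your plan is correct and is essentially the paper's own argument, just organized slightly differently. The paper proceeds in two steps: \emph{Step 1} shows that if all pairs $(T_j,T'_j)$ and $(T,T')$ have the \emph{same} singularity type then the two relative products are cohomologous (writing the difference as $S_1+S_2$, where $S_1$ collects the variation in the $T_j$-slots and $S_2$ the variation in the relative slot, and killing both via the integration-by-parts identity \cite[Claim in Proposition~4.2]{Viet-generalized-nonpluri}); \emph{Step 2} handles the general case by the simultaneous approximants $u_j^l:=\max\{u_j,u'_j-l\}$ and $\varphi^l:=\max\{\varphi,\varphi'-l\}$, applies Step~1 to identify the class of every approximant product with $\{\langle\wedge T'_j\,\dot\wedge\,T'\rangle\}$, and then uses the semicontinuity result \cite[Lemma~4.1]{Viet-generalized-nonpluri} for the decreasing limit.

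Your decomposition --- first quote the already-known monotonicity in the $T_j$-slots and then redo only the relative-slot variation --- amounts to splitting $S_1$ and $S_2$ into two separate lemmas, but the analytic content (the $v_\ell=\max\{v,v'-\ell\}$ approximation and the Stokes computation) is identical to the paper's. One small clarification: at the level of the approximants $v_\ell$ you get \emph{equality} of classes (since $v_\ell$ and $v'$ have the same singularity type), not an inequality; the inequality enters only at the very end, via the semicontinuity of the relative product under decreasing potentials, which is the role of \cite[Lemma~4.1 and Theorem~2.2]{Viet-generalized-nonpluri} and which you should invoke explicitly rather than rely on the bare definition of the product.
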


Recall that for closed positive $(1,1)$-currents $P$ and $P'$ on $X$, we say that $P'$ is less singular than $P$ if for every global potential $u$ of $P$ and $u'$ of $P'$, then $u \le u' +O(1)$. 

\proof Since this result is crucial for us, we will present its proof below. Write $T_j= \ddc u_j + \theta_j$, $T'_j=\ddc u'_j+ \theta_j$, where $\theta_j$ is  a smooth form  and $u'_j, u_j$ are negative $\theta_j$-psh functions,  for every $1 \le j \le m$. Similarly, we have $T= \ddc \varphi+ \eta$, $T'=  \ddc\varphi'+ \eta'$. \\

\noindent
\textbf{Step 1.}  Assume for the moment that $T_j,T'_j$ are of the same singularity type for every $1 \le j \le m$ and $T,T'$ are also of the same singularity type. We will check that 
\begin{align}\label{eq-same-singularity}
\{\langle T_1 \wedge \cdots \wedge T_m \dot{\wedge} T \rangle \} = \{\langle T'_1 \wedge \cdots \wedge T'_m \dot{\wedge} T' \rangle\}.
\end{align}
Since $T_j,T'_j$ are of the same singularity type, we have $\{u_j= -\infty\}= \{u'_j= -\infty\}$ and  $w_j:= u_j - u'_j$ is bounded. We have similar properties for $\varphi, \varphi'$.    Let $A:= \cup_{j=1}^m \{u_j= -\infty\}$ which is a complete pluripolar set. Put $u_{j k}:= \max\{u_j, -k\}$, $u'_{j k}:= \max\{u'_j, -k\}$ and
\begin{align}\label{equa-defvarphik}
\psi_k:= k^{-1}\max\{\sum_{j=1}^n (u_j+u'_j), -k\}+ 1
\end{align}
which is quasi-psh and $0\le \psi_k \le 1$, $\psi_k(x)$ increases to $1$ for $x\not \in A$. We have $\psi_k(x) =0$ if $u_j(x)\le -k$ or $u'_j(x) \le -k$ for some $j$.  Put $w_{j k}:= u_{j k} - u'_{j k}$. Since $w_j$ is bounded, we have
\begin{align}\label{ine-hieuujujphay2}
|w_{j k}| \lesssim 1
\end{align}
on $X$.   Let $J, J' \subset \{1, \ldots, m\}$ with $J \cap J' = \varnothing$. Put
$$R_{JJ'k}:= \wedge_{j \in J}  (\ddc u_{j k} + \theta_j) \wedge \wedge_{j' \in J'} (\ddc u'_{j' k} + \theta_{j'})\wedge T$$
and
$$R_{JJ'}:= \big \langle \wedge_{j \in J}  (\ddc u_{j} + \theta_j) \wedge \wedge_{j' \in J'} (\ddc u'_{j'} + \theta_{j'}) \dot{\wedge}T\big\rangle.$$
Let 
$$B_k:= \cap_{j \in J} \{u_j > -k\} \cap \cap_{j' \in J'}\{ u'_{j'} > -k\}.$$
Observe
$$0\le \bold{1}_{B_{k}} R_{JJ'}= \bold{1}_{B_{k}} R_{J J' k}$$
for every $J,J',k$.  Put $\tilde{R}_{JJ'}:= \bold{1}_{X \backslash A} R_{JJ'}$. The last current is closed  positive.  Using  the fact that $\{\psi_{k} \not =0\} \subset B_{k} \backslash A$, we get 
\begin{align}\label{eq-bieudienRjkquaRjnew}
\psi_{k} \tilde{R}_{JJ'}= \psi_{k} R_{JJ'}= \psi_{k} R_{JJ'k}.
\end{align}
Put $p':=n- |J|-|J'|-p-1$.  By Claim in the proof of \cite[Proposition 4.2]{Viet-generalized-nonpluri}, for every $j'' \in \{1, \ldots, m\} \backslash (J \cup J')$ and every closed smooth form $\Phi$ of  bi-degree $(p',p')$ on $X$,  we have 
\begin{align}\label{limit-wjkbichanbansua}
\lim_{k \to \infty}\int_X \psi_{k} \ddc w_{j'' k} \wedge R_{JJ'k} \wedge \Phi =0.
\end{align}

Let 
$$S_0:= \langle T_{1} \wedge \cdots  \wedge T_{n}\dot{\wedge} T \rangle - \langle T'_{1} \wedge \cdots  \wedge T'_{n}\dot{\wedge} T \rangle$$
and 
$$S_1:= \langle T_{1} \wedge \cdots  \wedge T_{n}\dot{\wedge} T \rangle - \langle T'_{1} \wedge \cdots  \wedge T'_{n}\dot{\wedge} T \rangle, \quad S_2:= \langle T'_{1} \wedge \cdots  \wedge T'_{n}\dot{\wedge} (T- T') \rangle.$$
We have $S_0= S_1+ S_2$.  Using $T_{jk}= T'_{jk}+ \ddc w_{jk}$, one can check that
\begin{align*}
\int_X \psi_{k} S_1\wedge \Phi  =\sum_{s=1}^m \int_X \psi_{k} \wedge_{j=1}^{s-1} T'_{jk}\wedge \ddc w_{s k} \wedge \wedge_{j=s+1}^m T_{jk} \wedge T \wedge \Phi
\end{align*}
 for every closed smooth  $\Phi$. 
This together with (\ref{limit-wjkbichanbansua}) yields  
\begin{align}\label{limit-wjkbichanbansua2}
\langle S_1, \Phi \rangle = \lim_{k\to \infty} \langle \psi_{k}S_1, \Phi\rangle =0.
\end{align}
Let $\varphi_l:= \max\{\varphi, -l\}$ and $\varphi'_l:= \max\{\varphi', -l\}$ for $l \in \N$. By \cite[Theorem 2.2]{Viet-generalized-nonpluri},  observe
\begin{align} \label{eq-tinhS2phailayvarpgil}
 \int_X \psi_k S_2 \wedge \Phi=  \lim_{l \to \infty} \int_X \psi_k \ddc(\varphi_l - \varphi'_l) \wedge   T'_{1k} \wedge \cdots T'_{mk} \wedge \Phi.
\end{align}
Since $\varphi_l- \varphi'_l$ is bounded uniformly in $l \in \N$,  reasoning as in the proof of (\ref{limit-wjkbichanbansua}), we see that the term under limit in the right-hand side of (\ref{eq-tinhS2phailayvarpgil})   converges to $0$ as $k \to \infty$ uniformly in $l$.  Hence  
$$ \int_X \psi_k S_2 \wedge \Phi \to 0$$
as $k \to \infty$. Consequently,  we get $ \int_X \psi_k S \wedge \Phi \to 0$ as $k \to \infty$. In other words, (\ref{eq-same-singularity}) follows. This finishes Step 1. \\

\noindent
\textbf{Step 2.}  Consider now the general case, \emph{i.e,} $T'_j$ and $T'$ are less singular than $T_j$ and $T$ respectively. Without loss of generality, we can assume that $u'_j \ge u_j$ and $\varphi' \ge \varphi$.  For $l \in \N$, put $u_{j}^l:= \max\{ u_j, u'_j - l \}$ which is of the same singularity type as $u'_j$. Notice that $\ddc u_{j}^l+ \theta_j \ge 0$. Similarly, put $\varphi^l:= \max\{\varphi, \varphi'-l\}$ and  $T^l:= \ddc \varphi^l + \eta \ge 0$.  

Since $X$ is K\"ahler, the family of currents  $\langle \wedge_{j=1}^m (\ddc u_{j}^l+ \theta_j)  \dot{\wedge} T^l \rangle$ parameterized by $l$  is of uniformly bounded mass.  Let $S$ be a limit current of  the last family  as $l \to \infty$. Since $u^l_j, u'_j$ are of the same singularity type for every $j$ and $\varphi^l, \varphi'$ are so,  using  Step 1, we see that 
\begin{align}\label{eq-SbangT'phjay}
\{S\}= \{\langle \wedge_{j=1}^m T'_j  \dot{\wedge} T' \rangle\}.
\end{align}  
On the other hand, since $u_{j}^l, \varphi^l $ decrease to $u_j, \varphi$ as $l \to \infty$ respectively, we can apply \cite[Lemma 4.1]{Viet-generalized-nonpluri} (and  \cite[Theorem 2.2]{Viet-generalized-nonpluri}) to get
$$S \ge \langle \wedge_{j=1}^m T_j \dot{\wedge} T \rangle.$$
This combined with (\ref{eq-SbangT'phjay}) gives  the desired assertion. The proof is finished. 
\endproof

We note here the following remark which could be useful for other works.

\begin{remark} Let $P$ and $P'$ be closed positive $(1,1)$-currents and $Q$ a closed positive currents such that $P'$ is less singular than $P$ and potentials of $P$ are integrable with respect to the trace measure of $Q$. Put $T:= P \wedge Q$ and $T':= P' \wedge Q$.  Then Theorem \ref{th-mono-current11} still holds for these $T',T$ with the same proof. The only minor modification is that  the potentials $\varphi, \varphi'$ of $T,T'$  in the last proof  are replaced by those  of $P,P'$.
\end{remark}

For a $(1,1)$-current $P$, recall that \emph{the polar locus} $I_P$ of $P$ is the set of $x\in X$ so that the potentials of $P$ are equal to $-\infty$ at $x$. By abuse of language, we say that a closed positive current $T$ has no mass on a Borel set $A \subset X$, if the trace measure of $T$ has no mass on $A$. 

For every pseudoeffective $(1,1)$-class $\beta$ in $X$, we define its \emph{polar locus $I_\beta$} to be that of a current with minimal singularities in $\beta$. This is independent of the choice of a current with minimal singularities. We have the following.

\begin{lemma}  \label{le-relativeandnonrelative} Assume that  $T$ is of bi-degree $(1,1)$.  Then we have 
\begin{align}\label{eq-gennonpluriclassi3}
\langle T_1 \wedge \cdots \wedge T_m  \wedge T \rangle=\langle T_1 \wedge \cdots \wedge T_m \dot{\wedge} ( \bold{1}_{X \backslash I_{T}}T) \rangle,
\end{align}
In particular, $T$ has no mass on $I_T$, then 
$$\langle T_1 \wedge \cdots \wedge T_m  \wedge T \rangle=\langle T_1 \wedge \cdots \wedge T_m \dot{\wedge} T \rangle.$$
\end{lemma}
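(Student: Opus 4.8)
The plan is to reduce the relative non-pluripolar product against the $(1,1)$-current $T$ to the usual non-pluripolar product by comparing their local defining formulas directly, using that the only difference between the two constructions is whether the potential of $T$ is truncated along with those of $T_1,\dots,T_m$. Write locally $T=\ddc\varphi$ with $\varphi$ psh and $T_j=\ddc u_j$ on a small ball, where $u_j$ is psh (absorbing the smooth forms into local potentials as is done earlier in the excerpt). The usual non-pluripolar product $\langle T_1\wedge\cdots\wedge T_m\wedge T\rangle$ is the limit of $\bold{1}_{\cap_j\{u_j>-k\}\cap\{\varphi>-k\}}\,\ddc\max\{u_1,-k\}\wedge\cdots\wedge\ddc\max\{u_m,-k\}\wedge\ddc\max\{\varphi,-k\}$, while the right-hand side of (\ref{eq-gennonpluriclassi3}) is the limit of $\bold{1}_{\cap_j\{u_j>-k\}}\,\ddc\max\{u_1,-k\}\wedge\cdots\wedge\ddc\max\{u_m,-k\}\wedge(\bold{1}_{X\setminus I_T}T)$.

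First I would observe that $I_T=\{\varphi=-\infty\}$ locally, and that $\bold{1}_{X\setminus I_T}T=\bold{1}_{\{\varphi>-\infty\}}\ddc\varphi$. The key point is the strong quasi-continuity / locality statement already recorded in the excerpt (the $R_k$ being independent of the truncation level $l\ge k$, "A similar equality also holds if we use local potentials"): on the open set $\cap_j\{u_j>-k\}$ the current $\ddc\max\{u_1,-k\}\wedge\cdots\wedge\ddc\max\{u_m,-k\}$ agrees with the corresponding product using any larger truncation, and wedging this against a closed positive $(1,1)$-current is a well-defined operation since the bounded psh functions $\max\{u_j,-k\}$ have gradients in $L^2_{\mathrm{loc}}$ of the trace measure. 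So both sides, when restricted by $\bold{1}_{\cap_j\{u_j>-k\}}$, can be computed as $\ddc\max\{u_1,-k\}\wedge\cdots\wedge\ddc\max\{u_m,-k\}\wedge T$ away from $I_T$; the discrepancy is only the behaviour of the last factor $T$ versus $\ddc\max\{\varphi,-k\}$, and on $\{\varphi>-k\}$ these coincide while on $\{-\infty<\varphi\le -k\}$ the factor $\max\{\varphi,-k\}$ contributes nothing to the non-pluripolar product in the limit (its mass there tends to zero by the usual plurifineness/locality argument, exactly as in the monotonicity proof given above) and $\bold{1}_{X\setminus I_T}T$ restricted to this set also disappears in the limit $k\to\infty$. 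Hence the two increasing-in-$k$ sequences have the same limit, giving (\ref{eq-gennonpluriclassi3}).

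For the "in particular" clause: if $T$ puts no mass on $I_T$ then $\bold{1}_{X\setminus I_T}T=T$ as currents, so the right-hand side of (\ref{eq-gennonpluriclassi3}) is literally $\langle T_1\wedge\cdots\wedge T_m\dot\wedge T\rangle$, and the claimed identity is immediate. I would also remark that, as noted just before Lemma~\ref{le-relativeandnonrelative}, $I_T\subseteq I_{\{T\}}$ always, so this hypothesis is automatic when $\{T\}$ has a current with minimal singularities carrying no mass on its polar locus (e.g.\ when $\{T\}$ is nef, or more generally modified nef).

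The main obstacle I anticipate is making rigorous the claim that $\bold{1}_{\cap_j\{u_j>-k\}}\bigl(\ddc\max\{u_1,-k\}\wedge\cdots\wedge\ddc\max\{u_m,-k\}\bigr)\wedge T$ is genuinely the same current whether one first wedges with $T$ and then multiplies by the indicator, or multiplies first — i.e.\ that the cut-off set $\cap_j\{u_j>-k\}$ is "plurifine-open" relative to the measure $\ddc\max\{u_1,-k\}\wedge\cdots\wedge\ddc\max\{u_m,-k\}\wedge T$ so that restriction commutes with the wedge. This is exactly the content of the strong quasi-continuity results \cite[Theorems 2.4 and 2.9]{Viet-generalized-nonpluri} invoked in the construction of relative products in the excerpt, applied with the extra factor $T$; so the proof amounts to quoting those results in the present configuration and then carefully tracking where the last potential $\varphi$ enters. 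Everything else is bookkeeping with increasing limits of positive currents of uniformly bounded mass.
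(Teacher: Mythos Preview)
Your argument is essentially correct and proceeds by directly comparing the defining truncation sequences of the two sides, invoking the strong quasi-continuity results of \cite[Theorems 2.4, 2.9]{Viet-generalized-nonpluri} to identify them on the plurifine-open sets $\cap_j\{u_j>-k\}\cap\{\varphi>-k\}$, and then arguing that the residual mass on $\{-\infty<\varphi\le -k\}$ vanishes in the limit. This last step deserves one more sentence of justification: you need that $\langle\wedge_j T_j\dot\wedge(\bold{1}_{X\setminus I_T}T)\rangle$ itself puts no mass on the pluripolar set $I_T$, which follows because $\bold{1}_{X\setminus I_T}T$ charges no pluripolar set and wedging with bounded-potential currents preserves this property.

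The paper's own proof is much shorter and purely by citation: it first invokes \cite[Proposition~3.6]{Viet-generalized-nonpluri} to get
\[
\langle T_1\wedge\cdots\wedge T_m\wedge T\rangle=\bold{1}_{X\setminus I_T}\,\langle T_1\wedge\cdots\wedge T_m\dot\wedge T\rangle,
\]
and then \cite[Proposition~3.5\,(vii)]{Viet-generalized-nonpluri} to move the indicator inside the relative product. Your approach is effectively a direct re-derivation of those two cited facts in this special configuration, which makes your write-up more self-contained but longer; the paper's route is cleaner if one is willing to quote the reference.

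One minor correction to your closing remark: the inclusion goes the other way, $I_{\{T\}}\subseteq I_T$ (the minimal-singularity current is \emph{less} singular, so its polar locus is smaller), and in any case the hypothesis ``$T$ has no mass on $I_T$'' is a statement about $T$ itself, not about the minimal-singularity representative of its class, so that side comment should be dropped.
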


\proof By \cite[Proposition 3.6]{Viet-generalized-nonpluri}, we get 
\begin{align}\label{eq-gennonpluriclassi3proof}
\langle T_1 \wedge \cdots \wedge T_m  \wedge T \rangle= \bold{1}_{X \backslash I_{T}}\langle T_1 \wedge \cdots \wedge T_m \dot{\wedge} T \rangle.
\end{align}
Now using (\ref{eq-gennonpluriclassi3proof}) and  \cite[Proposition 3.5]{Viet-generalized-nonpluri} (vii) gives (\ref{eq-gennonpluriclassi3}).
This finishes the proof.
\endproof

Let $1 \le  l \le m$. Let $\alpha_l, \ldots,\alpha_m,\beta$ be pseudoeffective $(1,1)$-classes of $X$. Let $T_{j, \min}, T_{\min}$ be currents with minimal singularities in  the classes $\alpha_j, \beta$ respectively, where $l \le j \le m$.  By Theorem \ref{th-mono-current11}, the class 
$$\big\{\langle T_1 \wedge \cdots T_{l-1} \wedge  T_{l,\min} \wedge \cdots \wedge T_{m, \min} \dot{\wedge} T_{\min} \rangle \big \}$$   is  a well-defined pseudoeffective class which is independent of the choice of  $T_{\min}$ and $T_{j, \min}$ for $l \le j \le m$. We denote the last class by 
$$\big \{\langle T_1 \wedge \cdots \wedge T_{l-1} \wedge  \alpha_l\wedge  \cdots \wedge \alpha_m \dot{\wedge} \beta \rangle \big\}.$$
For simplicity, when $l=1$, we remove the bracket $\{ \quad  \}$ from the last notation. 

The following result holds for the class $\big \{\langle T_1 \wedge \cdots \wedge T_{l-1} \wedge  \alpha_l\wedge  \cdots \wedge \alpha_m \dot{\wedge} \beta \rangle \big\}$  but to avoid cumbersome notations (while keeping the essence of the statements), we only write it for $l=1$. 
  
\begin{proposition} \label{pro-productclasss}

$(i)$ The product  $\langle  \wedge_{j=1}^m \alpha_j \dot{\wedge} \beta \rangle$ is symmetric and   homogeneous  in $\alpha_1,$ $\ldots, \alpha_m$.  

$(ii)$ If $\beta'$ is a pseudo-effective $(1,1)$-class, then 
$$\langle \wedge_{j=1}^m \alpha_j \dot{\wedge} \beta \rangle + \langle \wedge_{j=1}^m \alpha_j \dot{\wedge} \beta' \rangle \le \langle \wedge_{j=1}^m \alpha_j \dot{\wedge} (\beta+\beta') \rangle.$$

$(iii)$ Let $1 \le l \le m$ be an integer. Let  $\alpha''_1, \ldots, \alpha''_l$ be a pseudoeffective $(1,1)$-class such that $\alpha''_j \ge \alpha_j$ for $1 \le j \le l$.  Assume that there is a current with minimal singularities in $\beta$ having no mass on $I_{\alpha''_j- \alpha_j}$ for every $1 \le j \le l$. Then, we have 
$$\langle \wedge_{j=1}^l \alpha''_j \wedge  \wedge_{j=l+1}^m \alpha_j \dot{\wedge} \beta \rangle \ge  \langle \wedge_{j=1}^m \alpha_j \dot{\wedge} \beta \rangle.$$

$(iv)$ If there is a current with minimal singularities in $\beta$ having no mass on proper analytic subsets on $X$, then the product   $\{\langle  \wedge_{j=1}^m \alpha_j \dot{\wedge} \beta \rangle\}$ is continuous on the set of $(\alpha_1,\ldots, \alpha_m)$ such that $\alpha_1, \ldots, \alpha_m$ are big.

$(v)$ We have 
$$\langle  \wedge_{j=1}^m \alpha_j  \wedge \beta \rangle  \le \langle  \wedge_{j=1}^m \alpha_j \dot{\wedge} \beta \rangle$$
and the equality occurs if there is a current with minimal singularities $P$ in $\beta$ such that $P=0$ on $I_P$. 
\end{proposition}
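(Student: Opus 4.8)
The plan is to prove the five parts more or less independently, since each is a direct consequence of the corresponding property of the relative non-pluripolar product established in \cite{Viet-generalized-nonpluri} together with the monotonicity Theorem \ref{th-mono-current11}. For $(i)$, I would simply observe that if $T_{j,\min}$ are currents with minimal singularities in $\alpha_j$ and $T_{\min}$ is one in $\beta$, then $\langle \wedge_{j=1}^m T_{j,\min} \dot\wedge T_{\min}\rangle$ is symmetric and homogeneous in the $T_{j,\min}$ by the corresponding statement in \cite[Proposition 3.5]{Viet-generalized-nonpluri}, and passing to cohomology classes preserves this; the fact that the class is well-defined (independent of the choices) is exactly Theorem \ref{th-mono-current11}. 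Homogeneity needs the minor remark that a current with minimal singularities in $\lambda\alpha_j$ is $\lambda$ times one in $\alpha_j$.

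For $(ii)$, let $T_{\min}$ and $T'_{\min}$ be currents with minimal singularities in $\beta$ and $\beta'$. Then $T_{\min}+T'_{\min}$ is a closed positive current in $\beta+\beta'$, less singular than no particular representative but in any case: by Theorem \ref{th-mono-current11} applied with the bi-degree $(1,1)$ current, $\{\langle \wedge_{j=1}^m T_{j,\min}\dot\wedge(T_{\min}+T'_{\min})\rangle\}\le \langle \wedge_{j=1}^m\alpha_j\dot\wedge(\beta+\beta')\rangle$ (here one uses that a current with minimal singularities in $\beta+\beta'$ is less singular than $T_{\min}+T'_{\min}$). Then I would invoke the additivity/superadditivity of the relative product in its last slot — this is the property in \cite[Proposition 3.5]{Viet-generalized-nonpluri}, namely $\langle\wedge_j T_{j,\min}\dot\wedge T_{\min}\rangle+\langle\wedge_j T_{j,\min}\dot\wedge T'_{\min}\rangle\le\langle\wedge_j T_{j,\min}\dot\wedge(T_{\min}+T'_{\min})\rangle$ — and combine. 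For $(v)$, the inequality $\langle\wedge_j\alpha_j\wedge\beta\rangle\le\langle\wedge_j\alpha_j\dot\wedge\beta\rangle$ follows from Lemma \ref{le-relativeandnonrelative}: the left side equals $\langle\wedge_j T_{j,\min}\dot\wedge(\ind_{X\setminus I_{T_{\min}}}T_{\min})\rangle$, and $\ind_{X\setminus I_{T_{\min}}}T_{\min}\le T_{\min}$ as positive currents, so monotonicity of the relative product in the last slot (again \cite[Proposition 3.5]{Viet-generalized-nonpluri}) gives the inequality; equality when $P=0$ on $I_P$ is immediate since then $\ind_{X\setminus I_P}P=P$, using Lemma \ref{le-relativeandnonrelative}.

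For $(iii)$, write $\alpha''_j=\alpha_j+\gamma_j$ with $\gamma_j$ pseudoeffective and $I_{\gamma_j}=I_{\alpha''_j-\alpha_j}$. Let $P$ be a current with minimal singularities in $\beta$ having no mass on $I_{\gamma_j}$ for all $j\le l$. Choosing $T_{j,\min}$ in $\alpha_j$ and $S_{j,\min}$ in $\gamma_j$, the sum $T_{j,\min}+S_{j,\min}$ lies in $\alpha''_j$; by Theorem \ref{th-mono-current11} it suffices to compare the relative products with these explicit representatives. I would then use a telescoping/multilinearity argument in the first $l$ slots: adding a closed positive current $S_{j,\min}\ge0$ to $T_{j,\min}$ can only increase the relative product provided the resulting extra terms are legitimate relative products, which is where the no-mass hypothesis on $P$ along $I_{\gamma_j}$ enters — it guarantees that the mixed products involving $S_{j,\min}$ are well-defined and that $\ind$-restriction identities à la Lemma \ref{le-relativeandnonrelative} do not lose mass. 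The cleanest route is probably to induct on $l$, at each step using part $(ii)$-type superadditivity in one $\alpha$-slot (the relative product is superadditive in each $(1,1)$-slot, not just the last) together with positivity. For $(iv)$, continuity on the big cone: when $\alpha_1,\dots,\alpha_m$ are big, $\langle\wedge_j\alpha_j\dot\wedge\beta\rangle$ computed with the minimal-singularity current $P$ in $\beta$ that charges no proper analytic set coincides, by Lemma \ref{le-relativeandnonrelative}, with a genuine non-pluripolar product $\langle\wedge_j T_{j,\min}\wedge P\rangle$; I would then reduce to the known continuity of the positive product of big classes — \cite[Proposition 4.6]{Viet-generalized-nonpluri} identifies $\langle\wedge_j\alpha_j\rangle$ with the \cite{BEGZ} positive product, which is continuous on the big cone — and the extra fixed factor $\ind_{X\setminus I_P}P$ does not affect the limiting argument because $P$ puts no mass on the (analytic) unbounded loci of the $T_{j,\min}$.

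The main obstacle I anticipate is part $(iii)$: making precise the multilinear manipulation in the first $l$ slots while keeping every intermediate expression a bona fide relative non-pluripolar product. The no-mass hypothesis on $\beta$ along $I_{\alpha''_j-\alpha_j}$ is exactly what is needed so that Lemma \ref{le-relativeandnonrelative}-type identities let one pass freely between $\langle\cdots\wedge\cdots\rangle$ and $\langle\cdots\dot\wedge\cdots\rangle$, but checking that the telescoping terms $\langle T_{1,\min}\wedge\cdots\wedge S_{j,\min}\wedge\cdots\dot\wedge P\rangle$ are positive and behave additively will require careful bookkeeping with the truncation functions $\max\{u,-k\}$, in the spirit of Step 1 of the proof of Theorem \ref{th-mono-current11}. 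Parts $(i)$, $(ii)$, $(v)$ I expect to be short; part $(iv)$ is essentially a citation once the reduction to the \cite{BEGZ} positive product is set up.
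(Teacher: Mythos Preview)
Your proposal is correct and aligns with the paper's own proof. The paper proves $(v)$ exactly as you do, via Lemma \ref{le-relativeandnonrelative} and the definition, and for $(i)$--$(iv)$ it simply writes that ``the other desired statements can be proved by using arguments similar to those in the proof of \cite[Proposition 4.6]{Viet-generalized-nonpluri}'' --- i.e., it defers entirely to the same toolkit (\cite[Proposition 3.5]{Viet-generalized-nonpluri}, Theorem \ref{th-mono-current11}, and the approximation/continuity arguments for big classes) that you invoke in your more detailed sketch.

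One small caution on your treatment of $(iii)$: the phrase ``the relative product is superadditive in each $(1,1)$-slot'' is not true at the level of currents without extra hypotheses (for a single factor one has $\langle T+S\rangle = \bold{1}_{X\setminus(I_T\cup I_S)}(T+S) \le \langle T\rangle + \langle S\rangle$, which goes the wrong way). What one actually needs, and what the no-mass hypothesis on $P$ along $I_{\alpha''_j-\alpha_j}$ buys, is the cohomological monotonicity $\{\langle (T_{j,\min}+S_{j,\min})\wedge\cdots\dot\wedge P\rangle\}\ge\{\langle T_{j,\min}\wedge\cdots\dot\wedge P\rangle\}$; this is indeed the content hidden in the reference to \cite[Proposition 4.6]{Viet-generalized-nonpluri}, and your instinct that the truncation bookkeeping from Step 1 of Theorem \ref{th-mono-current11} is the right tool is correct. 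Since the paper itself does not spell this out either, your sketch is at least as complete as the published argument.
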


\proof We see that $(v)$ is a direct consequence of Lemma \ref{le-relativeandnonrelative} and the definition of the product $\langle  \wedge_{j=1}^m \alpha_j \dot{\wedge} \beta \rangle$. The other desired statements can be proved by using arguments similar to those in the proof of \cite[Proposition 4.6]{Viet-generalized-nonpluri}; see also \cite{Boucksom-derivative-volume} for related materials. This finishes the proof.  
\endproof

The following result will be useful later. 

\begin{lemma}\label{le-sosanhpolarlocus} Let $\alpha$ be a big class and let $T_{\alpha,\min}$ be a current with minimal singularities in $\alpha$. Let $T$ be a current in $\alpha$. Then, the current $T_\alpha:= \bold{1}_{I_{T_{\alpha, \min}}}T_{\alpha,\min}$ is a linear combination of currents of integration along irreducible hypersurfaces of $X$, and we have 
\begin{align}\label{ine-soanshpolarlocusalphamin}
T_{\alpha} \le \bold{1}_{I_{T}}T. 
\end{align}
In particular, for every pluripolar set $A$, if  $T$ has no mass on $A$, then  neither does $T_{\alpha,\min}$. 
\end{lemma}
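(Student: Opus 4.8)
The plan is to analyze the structure of $\bold{1}_{I_{T_{\alpha,\min}}}T_{\alpha,\min}$ first, then derive the comparison \eqref{ine-soanshpolarlocusalphamin} by a Siu-type decomposition argument combined with minimality of singularities.

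First I would recall that since $\alpha$ is big, a current with minimal singularities $T_{\alpha,\min} = \ddc u_{\min} + \theta$ has analytic singularities in a weak sense: more precisely, the polar locus $I_{T_{\alpha,\min}}$ is contained in the non-K\"ahler locus $E_{nK}(\alpha)$, and by Boucksom's work the generic Lelong number $\nu(\alpha, D)$ along any prime divisor $D$ controls the divisorial part. The key point is that the restriction of a closed positive $(1,1)$-current to its polar locus, when that locus has codimension one, picks out exactly the divisorial Siu decomposition. So I would invoke Siu's decomposition $T_{\alpha,\min} = \sum_k \lambda_k [D_k] + R$ where $D_k$ are the prime divisors with $\nu(T_{\alpha,\min}, D_k) = \lambda_k > 0$ and $R$ is a closed positive current with $\nu(R, D) = 0$ for every prime divisor $D$. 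Then $\bold{1}_{I_{T_{\alpha,\min}}} T_{\alpha,\min} = \sum_k \lambda_k [D_k] + \bold{1}_{I_{T_{\alpha,\min}}} R$; the second term is supported on a pluripolar set but $R$ by construction puts no mass on any hypersurface, and in fact (because $\alpha$ is big, so $u_{\min}$ has small unbounded locus / the polar locus is the increasing union along which the Lelong numbers are already accounted for) one checks $\bold{1}_{I_{T_{\alpha,\min}}}R = 0$. Hence $\bold{1}_{I_{T_{\alpha,\min}}}T_{\alpha,\min} = \sum_k \lambda_k [D_k]$, a locally finite (hence by compactness finite) linear combination of integration currents along irreducible hypersurfaces — this gives the first assertion.

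For \eqref{ine-soanshpolarlocusalphamin}: let $T = \ddc u + \theta \in \alpha$ with Siu decomposition $T = \sum_k \mu_k [D'_k] + R'$. Since $T_{\alpha,\min}$ is a current with minimal singularities, $u \le u_{\min} + O(1)$, so for every prime divisor $D$ we have $\nu(T, D) = \nu(u, D) \ge \nu(u_{\min}, D) = \nu(T_{\alpha,\min}, D)$. Therefore every $D_k$ appearing in the decomposition of $T_{\alpha,\min}$ with coefficient $\lambda_k$ also appears in that of $T$ with coefficient $\mu \ge \lambda_k$, which means $\sum_k \lambda_k [D_k] \le \sum \mu_k [D'_k] \le \bold{1}_{I_T} T$ — the last inequality because $\bold{1}_{I_T}T$ dominates the divisorial part of the Siu decomposition of $T$ (the restriction to the polar set contains at least all the integration currents along the $D'_k \subset I_T$). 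Combined with the first step this is exactly \eqref{ine-soanshpolarlocusalphamin}. The final "in particular" is then immediate: if $A$ is pluripolar and $\bold{1}_A T = 0$, then since $\bold{1}_{I_{T_{\alpha,\min}}}T_{\alpha,\min}$ is carried by a hypersurface $Z \subset I_{T_{\alpha,\min}} \subset I_T$ and is $\le \bold{1}_{I_T}T$, the mass of $T_{\alpha,\min}$ on $A$ splits as mass of $\bold{1}_{I_{T_{\alpha,\min}}}T_{\alpha,\min}$ on $A$ (bounded by mass of $\bold{1}_{I_T}T$ on $A$, hence zero since $\bold 1_A T = 0$ forces $\bold 1_A \bold 1_{I_T} T = 0$) plus the mass of $\bold{1}_{X\setminus I_{T_{\alpha,\min}}}T_{\alpha,\min}$ on $A \setminus I_{T_{\alpha,\min}}$, and the latter vanishes because that part of $T_{\alpha,\min}$ puts no mass on pluripolar sets (it has locally bounded potential off $I_{T_{\alpha,\min}}$, or more carefully: its non-pluripolar part charges no pluripolar set by definition).

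The main obstacle I anticipate is justifying cleanly that $\bold{1}_{I_{T_{\alpha,\min}}}R = 0$, i.e. that the non-divisorial part of $T_{\alpha,\min}$ carries no mass on its own polar locus. This is where bigness of $\alpha$ is essential: one uses that the polar locus of a minimal-singularities potential in a big class is contained in a proper analytic set (small unbounded locus, after passing to an appropriate model), so on the smooth locus of $I_{T_{\alpha,\min}}$ one can compare codimensions — a closed positive $(1,1)$-current puts no mass on an analytic set of codimension $\ge 2$, and the codimension-one part is precisely what Siu's decomposition extracts. I would cite Boucksom (\cite{Boucksom_l2}) and Demailly's analytic methods (\cite{Demailly_analyticmethod,Demailly_ag}) for these structural facts rather than reproving them. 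A secondary technical point is making the set-theoretic mass-splitting in the last paragraph rigorous, but that is routine given the structure established in the first two steps.
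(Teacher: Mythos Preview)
Your proposal is correct and follows essentially the same approach as the paper: both use bigness of $\alpha$ (via Demailly's analytic approximation) to place $I_{T_{\alpha,\min}}$ inside a proper analytic set, then invoke the support/structure theorem for closed positive $(1,1)$-currents to identify $\bold{1}_{I_{T_{\alpha,\min}}}T_{\alpha,\min}$ with its divisorial part, and both compare generic Lelong numbers along the resulting hypersurfaces (using minimality of $T_{\alpha,\min}$) to obtain \eqref{ine-soanshpolarlocusalphamin}, with the final ``in particular'' handled by the same splitting into the non-pluripolar piece and the piece on $I_{T_{\alpha,\min}}$. The only cosmetic difference is that you spell out the Siu decomposition explicitly and worry about $\bold{1}_{I_{T_{\alpha,\min}}}R=0$, whereas the paper bypasses this by noting directly that $\bold{1}_{I_{T_{\alpha,\min}}}T_{\alpha,\min}$ is closed (as the difference $T_{\alpha,\min}-\langle T_{\alpha,\min}\rangle$), positive, and supported in a proper analytic set, hence a sum of hypersurface currents by the support theorem; your concern about the residual $R$ is therefore already resolved by this observation.
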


\proof Recall that $I_\alpha= I_{T_{\alpha,\min}}$.  By Demailly's analytic approximation of $(1,1)$-currents (\cite{Demailly_analyticmethod}), there exists a K\"ahler current with analytic singularities $P$ in $\alpha$. It follows that $I_{\alpha}$ is contained in a proper analytic subset $V$ of $X$. This together the fact that $\supp T_{\alpha}$ is contained in the closure of $I_{\alpha}$ implies that $T_{\alpha}$ is supported on $V$. 

Since $T_{\alpha}$ is of bi-dimension $(n-1,n-1)$, using the first support theorem \cite[Page 141]{Demailly_ag}, we see that $T_{\alpha}$ is supported on the union of hypersurfaces of $X$ contained in $V$. Now the second support theorem \cite[Page 142-143]{Demailly_ag} impplies that $T_{\alpha}$ must be a linear combination of currents of integration along hypersurfaces. Hence    the first desired assertion follows.

We prove (\ref{ine-soanshpolarlocusalphamin}). It is enough to consider the case where $\bold{1}_{I_{T_{\alpha, \min}}}T_{\alpha,\min}$ is nonzero. Let $W$ be the support of the last current. By the above observation, $W$ is a hypersurface. Since $T$ is less singular than $T_{\alpha,\min}$, we get 
$$\nu(T, x) \ge \nu(T_{\alpha,\min},x)$$
 for every $x$. In particular, the generic Lelong number of $T$ along every irreducible component  $W'$  of $W$ is greater than or equal to that of $T_{\alpha, \min}$ along $W'$.   We deduce that  $T \ge \bold{1}_{I_{T_{\alpha, \min}}}T_{\alpha,\min}$. Hence,  (\ref{ine-soanshpolarlocusalphamin}) follows.

Let $A$ be a pluripolar set in $X$. Let $\varphi_{\min}$ be a potential of $T_{\alpha,\min}$. 
We have 
$$T_{\alpha,\min}= \bold{1}_{\{\varphi_{\min}> -\infty\}} T_{\alpha,\min}+ \bold{1}_{\{\varphi_{\min}= -\infty\}} T_{\alpha,\min}.$$
Denote by $I_1,I_2$ the first and second term in the right-hand side of the last equality respectively. By (\ref{ine-soanshpolarlocusalphamin}) and the hypothesis, we see that $I_2$ has no mass on $A$. We now show that $I_1$ satisfies the same property. 

If $\{\varphi_{\min}> -\infty\}$ is open, then it is clear that $I_1$ has no mass on $A$ because $\varphi_{\min}$ is locally bounded on the open set $\{\varphi_{\min}> -\infty\}$. However in general, when $\{\varphi_{\min}> -\infty\}$ is not necessarily open, some more arguments are needed. Recall that $I_1$ is actually equal to the non-pluripolar product $\langle T_{\alpha,\min} \rangle$ of $T_{\alpha,\min}$ itself (e.g, by applying \cite[Proposition 3.6 (i)]{Viet-generalized-nonpluri} to $T \equiv 1$ and $m=1$). Since the current $\langle T_{\alpha,\min} \rangle$ has no mass on pluripolar sets, we see that $I_1$ has no mass on $A$. Hence, $T_{\alpha, \min}$ has no mass on $A$. 
 This finishes the proof. 
\endproof

We note that (\ref{ine-soanshpolarlocusalphamin}) actually holds in a much more general setting; see \cite[Lemma 4.1]{AhagCegrellHiepMA}.

\section{Proof of Theorems \ref{the-main} and \ref{the-main3}} \label{sec-themain12}

We will sometimes use the notations $\gtrsim, \lesssim$ to denote the inequalities $\ge, \le$ modulo some strictly positive multiplicative constant independent of parameters in consideration. 
For every analytic set $W$ in a complex manifold $Y$, we denote by $[W]$ the current of integration along $W$.  

Let $X$ be a compact K\"ahler manifold. Let $\alpha_1,\ldots,\alpha_m$ be big classes in $X$.    Let $T_{j,\min}$ be a current with minimal singularities in $\alpha_j$ and  
$$T_{\alpha_j}:= \bold{1}_{I_{\alpha_j}} T_{j,\min}$$
 (recall here that $I_{\alpha_j}$ is the set of $x \in X$ such that potentials of $T_{j,\min}$ are equal to $-\infty$ at $x$). By Lemma \ref{le-sosanhpolarlocus}, the current $T_{\alpha_j}$ is a linear combination of currents of integration along irreducible hypersurfaces of $X$.   In view of proving Theorem \ref{the-main},  we first explain how to reduce the problem to the case where $T_{\alpha_j}$'s are zero.

\begin{lemma}\label{le-trudiphananalytic}For every $j$, the class $\alpha_j - \{T_{\alpha_j}\}$ is big and there holds
\begin{align}\label{eq-wedgealphajtrudianalytic}
\langle \wedge_{j=1}^m \alpha_j \rangle = \langle \wedge_{j=1}^m (\alpha_j - \{T_{\alpha_j}\}) \rangle.
\end{align}
\end{lemma}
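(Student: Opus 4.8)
The plan is to prove the two assertions of Lemma \ref{le-trudiphananalytic} separately, starting from the observation that each $T_{\alpha_j}$ is an effective divisor supported on the polar locus $I_{\alpha_j}$, which sits inside a proper analytic subset of $X$ (indeed, by Demailly's regularization there is a K\"ahler current with analytic singularities in $\alpha_j$). Bigness of $\alpha_j - \{T_{\alpha_j}\}$: since $T_{j,\min}$ has minimal singularities in the big class $\alpha_j$, we can write $T_{j,\min} = T_{\alpha_j} + R_j$ where $R_j := \bold{1}_{X\setminus I_{\alpha_j}} T_{j,\min}$ is a closed positive $(1,1)$-current in the class $\alpha_j - \{T_{\alpha_j}\}$; one checks that $R_j$ still carries a K\"ahler current (e.g. $T_{j,\min}$ dominates a K\"ahler current $\varepsilon\omega$, and subtracting the divisorial part does not destroy this on the Zariski-generic locus — more precisely, using a K\"ahler current $P$ with analytic singularities in $\alpha_j$ and subtracting $T_{\alpha_j}$, whose support is contained in $\supp P$, one produces a K\"ahler current in $\alpha_j - \{T_{\alpha_j}\}$). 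This gives bigness.

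For the cohomological identity \eqref{eq-wedgealphajtrudianalytic}, I would argue one factor at a time: it suffices to show
\[
\langle \alpha_1 \wedge \cdots \wedge \alpha_m \rangle = \langle (\alpha_1 - \{T_{\alpha_1}\}) \wedge \alpha_2 \wedge \cdots \wedge \alpha_m \rangle
\]
and then iterate (the other factors being symmetric by Proposition \ref{pro-productclasss}(i)). Represent the left-hand side by $\langle T_{1,\min} \wedge T_{2,\min} \wedge \cdots \wedge T_{m,\min}\rangle$. The key point is that $R_1 = T_{1,\min} - T_{\alpha_1}$ is a current with minimal singularities in $\alpha_1 - \{T_{\alpha_1}\}$: its potential differs from that of $T_{1,\min}$ by the (locally bounded away from $I_{\alpha_1}$, logarithmic along $I_{\alpha_1}$) potential of the divisor $T_{\alpha_1}$, and any competitor in $\alpha_1-\{T_{\alpha_1}\}$ would, after adding back $T_{\alpha_1}$, compete with $T_{1,\min}$ in $\alpha_1$. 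Granting this, the non-pluripolar product is insensitive to the divisorial part: $\langle (\ddc \max\{u_{R_1},-k\}+\theta'_1)\wedge\cdots\rangle$ versus $\langle(\ddc\max\{u_{1,\min},-k\}+\theta_1)\wedge\cdots\rangle$ differ only by contributions supported where $u_{1,\min}=-\infty$, i.e. on $I_{\alpha_1}$, which the non-pluripolar product discards. Cohomologically this is exactly $\{\langle R_1 \wedge T_{2,\min}\wedge\cdots\rangle\} = \langle(\alpha_1-\{T_{\alpha_1}\})\wedge\alpha_2\wedge\cdots\wedge\alpha_m\rangle$. Alternatively, and perhaps more cleanly, invoke Proposition \ref{pro-productclasss}(iii)/(v)-type monotonicity together with the reverse inequality: monotonicity \eqref{ine-mono-relativenopluri} gives $\langle\wedge(\alpha_j-\{T_{\alpha_j}\})\rangle \le \langle\wedge\alpha_j\rangle$ since $\alpha_j - \{T_{\alpha_j}\} \le \alpha_j$, while the computation above with non-pluripolar products gives the opposite inequality, forcing equality.

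The main obstacle I anticipate is the careful bookkeeping needed to identify $R_j = \bold{1}_{X\setminus I_{\alpha_j}}T_{j,\min}$ as a \emph{current with minimal singularities} in $\alpha_j - \{T_{\alpha_j}\}$ — i.e. verifying that subtracting the divisorial current $T_{\alpha_j}$ does not accidentally remove ``too much'' and that no competing current in the smaller class is strictly less singular. This requires knowing the structure of $T_{j,\min}$ near $I_{\alpha_j}$: that its generic Lelong numbers along the components of $\supp T_{\alpha_j}$ are exactly the divisorial multiplicities recorded in $T_{\alpha_j}$ (which is Lemma \ref{le-sosanhpolarlocus}), so that $R_j$ has zero generic Lelong number along those hypersurfaces and its potential is genuinely the ``minimal'' one in $\alpha_j - \{T_{\alpha_j}\}$. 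Once this structural fact is in hand, the passage to cohomology classes via the defining cut-off sequences $R_k$ for the non-pluripolar product is routine, using that these products put no mass on the pluripolar set $I_{\alpha_j}$ (Lemma \ref{le-relativeandnonrelative} and \cite[Proposition 3.6]{Viet-generalized-nonpluri}).
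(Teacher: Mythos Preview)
Your proposal is correct, and in fact your ``alternative'' at the end is precisely the paper's argument: the paper gets $\langle \wedge_j \alpha_j \rangle \ge \langle \wedge_j (\alpha_j - \{T_{\alpha_j}\}) \rangle$ from monotonicity of positive products of big classes, and for the reverse inequality simply observes the \emph{current-level} equality
\[
\langle \wedge_{j=1}^m T_{j,\min} \rangle \;=\; \langle \wedge_{j=1}^m (T_{j,\min} - T_{\alpha_j}) \rangle,
\]
valid because both sides put no mass on the hypersurface $W=\cup_j \supp T_{\alpha_j}$ and agree on $X\setminus W$; then \eqref{ine-mono-relativenopluri} applied to the currents $T_{j,\min}-T_{\alpha_j}\in \alpha_j-\{T_{\alpha_j}\}$ gives the missing $\le$. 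The point is that the paper never needs to know that $R_j=T_{j,\min}-T_{\alpha_j}$ has \emph{minimal} singularities in $\alpha_j-\{T_{\alpha_j}\}$ --- your ``main obstacle'' is entirely avoidable. (Your competitor argument for minimal singularities is correct, but superfluous.) For bigness the paper does exactly your ``more precisely'' step: take a K\"ahler current $P_j\in\alpha_j$, use Lemma~\ref{le-sosanhpolarlocus} to get $P_j-T_{\alpha_j}\ge 0$, and note that $P_j-T_{\alpha_j}=P_j\gtrsim\omega$ off the hypersurface $\supp T_{\alpha_j}$, hence everywhere since $\omega$ is smooth. One small correction: your aside that ``$T_{j,\min}$ dominates a K\"ahler current $\varepsilon\omega$'' is false in general --- a current with minimal singularities in a big class need not be a K\"ahler current --- so the detour through $P_j$ is genuinely necessary.
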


\proof  Let $\omega$ be a K\"ahler form on $X$. Fix an index $1 \le j \le m$.  Let $W_j$ be the support of $T_{\alpha_j}$. Consider a K\"ahler current $P_j \in \alpha_j$.  By Lemma \ref{le-sosanhpolarlocus}, the set $W_j$ is a hypersurface (or empty), and $P_j- T_{\alpha_j}$ is a closed positive current. Note that 
$$P_j- T_{\alpha_j} = P_j \gtrsim \omega$$
on $X \backslash W_j$. Since $\omega$ is smooth, we get   $P_j- T_{\alpha_j} \gtrsim \omega$ on $X$. In other words, $P_j- T_{\alpha_j}$ is a K\"ahler current. Hence, $\alpha_j - \{T_{\alpha_j}\}$ is big.

It remains to prove (\ref{eq-wedgealphajtrudianalytic}). The inequality direction $``\ge "$ is clear because $\alpha_j \ge \alpha_j - \{T_{\alpha_j}\}$. To get the converse inequality, one only needs to notice that 
$$\langle \wedge_{j=1}^m T_{j,\min} \rangle = \langle \wedge_{j=1}^m (T_{j,\min}- T_{\alpha_j}) \rangle$$
which is true because both sides are currents which have no mass on 
$$W:=\cup_{j=1}^m W_j$$
 (which is a closed pluripolar set) and are equal on $X \backslash W$ (which is an open subset of $X$).  The proof is finished. 
\endproof

Let $T_j \in \alpha_j$ be a closed positive current as in Theorem \ref{the-main}. By Lemma \ref{le-sosanhpolarlocus},  we have  $\bold{1}_{I_{T_j}}T_j \ge  T_{\alpha_j}$. It follows that  $T_j- T_{\alpha_j}$ is positive. Using the fact that $T_{\alpha_j}$ is supported on proper analytic subsets on $X$ gives  
$$\langle \wedge_{j=1}^m  T_j \rangle = \langle \wedge_{j=1}^m (T_j- T_{\alpha_j}) \rangle.$$
This combined with Lemma \ref{le-trudiphananalytic} yields that  $(T_1- T_{\alpha_1}), \ldots, (T_m-  T_{\alpha_m})$ are of full mass intersection. Hence, by considering $T_j- T_{\alpha_j}$, $\alpha_j - \{T_{\alpha_j}\}$ instead of $T_j, \alpha_j$, \emph{we can assume, from now on, that $T_{\alpha_j}$ is zero as desired}.

Assume for the moment that  $V$ is a  smooth submanifold of $X$ of dimension $\le n-1$. Let $\sigma: \widehat X \to X$ be  the blowup of $X$ along $V$. Denote by $\widehat V$ the exceptional hypersurface. Let $\omega$ be a K\"ahler form on $X$.  Let $\omega_h$ be a closed smooth form cohomologous to $-[\widehat V]$ so that the restriction of $\omega_h$ to each fiber of the natural projection from $\widehat V$ to $V$ is strictly positive (the existence of such a form is classical, see \cite[Lemma 3.25]{Voisin1}).  Thus, there exists a strictly positive constants $c_V$ satisfying that 
\begin{align}\label{ine-wideharomegaeta}
\widehat  \omega:= c_V \sigma^* \omega+ \omega_h>0
\end{align}
We note that when $\dim V = n-1$, by convention, we put $\widehat X:=X$, $\sigma:=\id$, $\widehat V:=V$, $c_V:=1$ and $\omega_h:= 0$. 
 
For every closed positive current $S$ on $X$, let $\lambda_{S}$ be the generic Lelong number of $S$ along $V$. By a well-known result on Lelong numbers under blowups (see \cite[Corollary 1.1.8]{Boucksom-these}), the generic Lelong number of $\sigma^* S$ along $\widehat V$ is equal to $\lambda_S$.  Hence, we can decompose 
$$\sigma^* T_j= \lambda_{T_j} [\widehat V]+ \eta_j,\quad \sigma^* T_{j,\min}= \lambda_{T_{j,\min}} [\widehat V]+ \eta_{j,\min},$$
where $\eta_j$ and  $\eta_{j,\min}$ are currents whose generic Lelong numbers along $\widehat V$ are zero. Since $T_{j,\min}$ is less singular than $T_j$, we have $\lambda_{T_j}\ge \lambda_{T_{j,\min}}$. 

Let 
$$\gamma_j:= \{\eta_j\}, \quad \gamma_{j,\min}:= \{\eta_{j,\min}\}, \quad \beta:= \{ [\widehat V]\}.$$
These classes are important in the sequel.  By \cite{Boucksom-volume,Demailly-Paun}, the class $\gamma_{j,\min}$ is big.  
For every closed smooth $(n-m,n-m)$-form  $\Phi$, using the fact that  $T_{j,\min}$ has minimal singularities and the monotonicity of non-pluripolar products gives 
\begin{align} \label{eq-Tminclassgamma}
\int_X \langle \wedge_{j=1}^m T_{j,\min} \rangle \wedge \Phi=\int_{\widehat X} \langle \wedge_{j=1}^m \eta_{j,\min} \rangle \wedge \sigma^*\Phi= \int_{\widehat X}\langle \wedge_{j=1}^m \gamma_{j,\min} \rangle  \wedge \sigma^*\Phi.
\end{align}

\begin{lemma} \label{le-sosanhetaSsphay} We have 
\begin{align} \label{ine-etaSS}
\langle \wedge_{j=1}^m \eta_{j} \rangle \le \langle \wedge_{j=1}^{m-1} \eta_j \dot{\wedge} \eta_{m} \rangle, \quad \langle \wedge_{j=1}^m \eta_{j,\min} \rangle = \langle \wedge_{j=1}^{m-1}\eta_{j,\min} \dot{\wedge} \eta_{m,\min} \rangle,
\end{align}
and 
\begin{align} \label{ine-etaSS22}
\langle \wedge_{j=1}^m \gamma_{j,\min} \rangle = \langle \wedge_{j=1}^{m-1} \gamma_{j,\min} \dot{\wedge}  \gamma_{m,\min} \rangle
\end{align}
\end{lemma}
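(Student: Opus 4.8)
\textbf{Plan for the proof of Lemma \ref{le-sosanhetaSsphay}.} The three relations all assert that a non-pluripolar product computed on $\widehat X$ can be rewritten with the last factor replaced by a relative non-pluripolar product against the current $\eta_m$ (resp.\ $\eta_{m,\min}$, resp.\ the current with minimal singularities $\gamma_{m,\min}$-representative), together with an inequality in the first case and equalities in the other two. The common mechanism is Lemma \ref{le-relativeandnonrelative}: if the last current has no mass on its own polar locus, then the relative product against it coincides with the usual product, while in general the relative product is at least as large. So the plan is: (1) for the second and third relations, show the relevant last current ($\eta_{m,\min}$, and a minimal-singularities representative in $\gamma_{m,\min}$) carries no mass on its polar locus, hence Lemma \ref{le-relativeandnonrelative} upgrades the inequality of Proposition \ref{pro-productclasss}(v) to an equality; (2) for the first relation, deduce the inequality from Proposition \ref{pro-productclasss}(v) directly (the usual product is $\le$ the relative one), with no hypothesis needed on $\eta_m$.

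First I would deal with $\eta_{m,\min}$. Since we have reduced (as arranged just before the lemma) to the case $T_{\alpha_j}=0$, the minimal-singularities currents $T_{j,\min}$ have no mass on proper analytic subsets of $X$ (their polar locus part $T_{\alpha_j}=\bold 1_{I_{\alpha_j}}T_{j,\min}$ vanishes, and by Lemma \ref{le-sosanhpolarlocus} the polar locus sits in a proper analytic set); pulling back by $\sigma$ and subtracting $\lambda_{T_{j,\min}}[\widehat V]$, the current $\eta_{j,\min}$ likewise has no mass on proper analytic subsets of $\widehat X$, in particular none on its own polar locus $I_{\eta_{m,\min}}$. Then Lemma \ref{le-relativeandnonrelative} gives $\langle \wedge_{j=1}^{m-1}\eta_{j,\min}\wedge\eta_{m,\min}\rangle = \langle \wedge_{j=1}^{m-1}\eta_{j,\min}\dot\wedge\eta_{m,\min}\rangle$, which is the middle equality in (\ref{ine-etaSS}). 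The third relation (\ref{ine-etaSS22}) is the cohomological shadow of this: by definition $\langle\wedge_{j=1}^m\gamma_{j,\min}\rangle$ and $\langle\wedge_{j=1}^{m-1}\gamma_{j,\min}\dot\wedge\gamma_{m,\min}\rangle$ are represented by the corresponding products of currents with minimal singularities in the classes $\gamma_{j,\min}$; choosing these representatives to be the $\eta_{j,\min}$ (which do have minimal singularities in $\gamma_{j,\min}$, since subtracting the generic Lelong number is the standard normalization and $\gamma_{j,\min}$ is big by \cite{Boucksom-volume,Demailly-Paun}) and invoking Proposition \ref{pro-productclasss}(v) together with the no-mass-on-polar-locus property just established, the equality follows. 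The first relation is then immediate: $\langle\wedge_{j=1}^m\eta_j\rangle = \langle\wedge_{j=1}^{m-1}\eta_j\wedge\eta_m\rangle \le \langle\wedge_{j=1}^{m-1}\eta_j\dot\wedge\eta_m\rangle$ by Proposition \ref{pro-productclasss}(v) applied on $\widehat X$ (no hypothesis on $\eta_m$ is required for the inequality direction).

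The main obstacle I anticipate is bookkeeping rather than conceptual: making sure that the reduction $T_{\alpha_j}=0$ genuinely forces $\eta_{j,\min}$ to put no mass on \emph{every} proper analytic subset of $\widehat X$ (not merely on $\widehat V$), and that this is the exact hypothesis needed in Lemma \ref{le-relativeandnonrelative} / Proposition \ref{pro-productclasss}(v). The subtle point is that $\sigma$ is not an isomorphism, so I must check that pushing the "no mass on proper analytic sets" property through the blowup and through the subtraction of $\lambda_{T_{j,\min}}[\widehat V]$ is legitimate: the pullback $\sigma^*T_{j,\min}$ could a priori acquire mass along $\widehat V$, but that is precisely the $\lambda_{T_{j,\min}}[\widehat V]$ term being removed, and off $\widehat V$ the map $\sigma$ is biholomorphic so no new mass on analytic sets is created. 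A second, more delicate point is identifying $\eta_{j,\min}$ as a current with \emph{minimal} singularities in $\gamma_{j,\min}$ — this is where one uses that the generic Lelong number along $\widehat V$ of the minimal-singularities current equals $\lambda_{T_{j,\min}}$ (Boucksom, \cite[Corollary 1.1.8]{Boucksom-these}), so after subtracting $\lambda_{T_{j,\min}}[\widehat V]$ the residual current is the minimal-singularities representative of the residual class. Once these two points are nailed down, the three displayed relations drop out of Lemma \ref{le-relativeandnonrelative} and Proposition \ref{pro-productclasss}(v) with essentially no further computation.
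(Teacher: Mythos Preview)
Your treatment of (\ref{ine-etaSS}) is correct and matches the paper: you show $\eta_{m,\min}$ carries no mass on its polar locus (using the reduction $T_{\alpha_j}=0$, the fact that $\eta_{m,\min}$ has zero generic Lelong number along $\widehat V$, and that $\sigma$ is biholomorphic off $\widehat V$), then invoke Lemma~\ref{le-relativeandnonrelative}. The first inequality in (\ref{ine-etaSS}) is likewise immediate from Lemma~\ref{le-relativeandnonrelative} (equivalently Proposition~\ref{pro-productclasss}(v)).

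There is, however, a genuine gap in your argument for (\ref{ine-etaSS22}). You assert that $\eta_{j,\min}$ is a current with minimal singularities in the class $\gamma_{j,\min}$, and your justification (``subtracting the generic Lelong number is the standard normalization'') only normalizes the behaviour along $\widehat V$; it says nothing about the global singularity type. Pull-backs of currents with minimal singularities need not have minimal singularities: $\sigma^*T_{j,\min}$ can be strictly more singular than a minimal representative of $\sigma^*\alpha_j$, and subtracting $\lambda_{T_{j,\min}}[\widehat V]$ does not repair this. So you cannot take $\eta_{j,\min}$ as the representative computing $\langle \wedge_{j=1}^m \gamma_{j,\min}\rangle$ or $\langle \wedge_{j=1}^{m-1}\gamma_{j,\min}\dot\wedge\gamma_{m,\min}\rangle$.

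The paper's proof of (\ref{ine-etaSS22}) avoids this claim entirely. It lets $Q_m$ be an honest current with minimal singularities in $\gamma_{m,\min}$, and uses Lemma~\ref{le-sosanhpolarlocus} (which applies because $\gamma_{m,\min}$ is big and $\eta_{m,\min}\in\gamma_{m,\min}$) to obtain
\[
\bold{1}_{I_{Q_m}}Q_m \;\le\; \bold{1}_{I_{\eta_{m,\min}}}\eta_{m,\min}\;=\;0.
\]
Thus $Q_m$ has no mass on its own polar locus, and Lemma~\ref{le-relativeandnonrelative} then gives the equality (\ref{ine-etaSS22}). The point is that the ``no mass on polar locus'' property you established for $\eta_{m,\min}$ transfers downward to the minimal representative via Lemma~\ref{le-sosanhpolarlocus}; you do not need $\eta_{m,\min}$ itself to be minimal.
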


\proof The first desired inequality in (\ref{ine-etaSS}) is clear by  Proposition \ref{pro-productclasss}.  Observe that   $\bold{1}_{I_{\eta_{m,\min}}} \eta_{m,\min}$ has no mass on $\widehat V$ because the generic Lelong number of $\eta_{m,\min}$ along $\widehat V$ is equal to zero. We deduce that
$$\bold{1}_{I_{\eta_{m,\min}}}\, \eta_{m,\min}= \bold{1}_{I_{\eta_{m,\min}} \backslash \widehat V}\, \eta_{m,\min} \le \sigma^* \big(\bold{1}_{\sigma(I_{\eta_{m,\min}})} T_{m,\min}\big) \le \sigma^*(\bold{1}_{I_{T_{m,\min}}} T_{m,\min}) =0.$$
Hence, $\eta_{m,\min}$ has no mass on $I_{\eta_{m,\min}}$. Combining this with  Lemma \ref{le-relativeandnonrelative} yields (\ref{ine-etaSS}).

We now prove (\ref{ine-etaSS22}). Let $Q_m$ be a current with minimal singularities in $\gamma_{m,\min}$. By Lemma \ref{le-sosanhpolarlocus} and the fact that $\gamma_{m,\min}$ is big, we see that 
$$\bold{1}_{I_{Q_m}} Q_m \le  \bold{1}_{\eta_{m,\min}} \eta_{m,\min}=0.$$
Hence,  $Q_m$ has no mass on $I_{Q_m}$. Using this and Lemma \ref{le-relativeandnonrelative} gives the desired equality and finishes the proof. 
\endproof

Fix a norm $\| \cdot \|$ on $H^{1,1}(X, \R)$. For $1 \le j  \le m$, let $P_j$ be a K\"ahler current with analytic singularities in $\alpha_j$.   Let $\epsilon>0$ be a constant small enough so that  $P_j \ge \epsilon \omega$ for every $1 \le j\le m$.  

\begin{lemma} \label{le-dongkahlertronfeta} For every constant $\delta \in (0,1)$, there exist a  constant $c_\delta>0$ and a K\"ahler current with analytic singularities $Q_j \in \gamma_{j,\min}- c_\delta \beta$ for $1 \le j \le m$ such that $I_{Q_j}$ does not contain $\widehat V$, and $Q_j \ge \frac{\delta \epsilon}{2 c_V} \widehat \omega$, and 
\begin{align}\label{ine-uocluongcdelta}
\frac{\delta \epsilon}{2 c_V}\le c_\delta \le \big(c\|\alpha_j\|+ \frac{\epsilon}{2 c_V}\big) \delta,
\end{align}
for some constant $c>0$ independent of $\delta, \beta$ and $\alpha_j$.  In particular, the currents with minimal singularities in $\gamma_{j,\min}- c_\delta \beta$ has no mass on $\widehat V$, and the current $[\widehat V]$ has no  mass on the polar locus of the class $\gamma_{j,\min}- c_\delta \beta- \frac{\delta \epsilon}{2 c_V} \{\widehat \omega\}$. 
\end{lemma}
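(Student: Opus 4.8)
The plan is to build each $Q_j$ by perturbing a suitable auxiliary K\"ahler current and then bookkeeping the cohomology classes carefully. First I would push the given K\"ahler currents $P_j \in \alpha_j$ up to $\widehat X$: since $\sigma^* P_j \ge \epsilon\, \sigma^*\omega$ and $\sigma^* P_j$ decomposes along $\widehat V$ with the \emph{same} generic Lelong number $\lambda_{T_{j,\min}}$ as $\sigma^* T_{j,\min}$ (this is where I use that $P_j$ and $T_{j,\min}$ both have minimal singularities, so Lemma \ref{le-sosanhpolarlocus} forces the generic Lelong numbers along $\widehat V$ to coincide — actually I only need $\le$, but minimality gives equality), I can write $\sigma^* P_j = \lambda_{T_{j,\min}}[\widehat V] + P_j'$ with $P_j'$ a closed positive current in $\gamma_{j,\min}$ whose generic Lelong number along $\widehat V$ is zero. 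The form $\omega_h \sim -[\widehat V]$ is strictly positive on the fibers of $\widehat V \to V$, so for a small parameter $\delta$ the current $P_j' + \tfrac{\delta\epsilon}{2c_V}\omega_h$ is, roughly, still positive off $\widehat V$ and gains a definite amount of positivity in the fiber directions; combined with the leftover $\epsilon$ of K\"ahlerness from $\sigma^*\omega$ this should yield a genuine K\"ahler current. To make this precise I set $Q_j := (1-\delta) P_j' + \delta\big(\tfrac{\epsilon}{2c_V}(\sigma^*\omega\text{-part}) + \tfrac{\epsilon}{2c_V}\omega_h + \text{correction}\big)$-type combination, chosen so that $Q_j \ge \tfrac{\delta\epsilon}{2c_V}\widehat\omega$ with $\widehat\omega = c_V\sigma^*\omega + \omega_h$ as in \eqref{ine-wideharomegaeta}.

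The cohomology class of $Q_j$ will then be of the form $\gamma_{j,\min} - c_\delta\beta$ for a constant $c_\delta$ that I read off from the coefficients in the convex combination: the $\delta$-weight on $\omega_h \sim -[\widehat V] = -\beta$ contributes a $-c_\delta\beta$ term, and any $\sigma^*\omega$ contribution has class a multiple of $\sigma^*\{\omega\}$, which I absorb by adjusting. The two-sided bound \eqref{ine-uocluongcdelta} comes from tracking these coefficients: the lower bound $\tfrac{\delta\epsilon}{2c_V}\le c_\delta$ is immediate from the weight needed to dominate $\tfrac{\delta\epsilon}{2c_V}\widehat\omega$ (whose $\omega_h$-component already costs that much $\beta$), and the upper bound $c_\delta \le (c\|\alpha_j\| + \tfrac{\epsilon}{2c_V})\delta$ comes from estimating how much of $\beta$ is consumed, using that $\epsilon$ can be taken comparable to a fixed multiple of $\min_j\|\alpha_j\|^{-1}$ or simply bounding the $\sigma^*\omega$-rescaling factor in terms of $\|\alpha_j\|$ via a fixed comparison constant $c$ depending only on $X$, $V$, $\omega$ (hence independent of $\delta,\beta,\alpha_j$). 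I should also arrange the analytic-singularities property: $P_j$ has analytic singularities, $\sigma^*$ preserves this, subtracting $\lambda_{T_{j,\min}}[\widehat V]$ keeps it, and adding the smooth forms $\omega_h, \sigma^*\omega$ does not change the singularity locus, so $Q_j$ has analytic singularities and $I_{Q_j} = \sigma(\text{an analytic set not containing a generic point, hence not }\widehat V)$ — more carefully, $I_{Q_j} \subset \sigma^{-1}(I_{P_j}) \cup \widehat V$ but the generic Lelong number of $P_j'$ along $\widehat V$ is zero and $Q_j$ is a small perturbation, so $\widehat V \not\subset I_{Q_j}$.

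Finally, the ``in particular'' clauses follow formally: a current with minimal singularities in $\gamma_{j,\min} - c_\delta\beta$ is less singular than $Q_j$, hence (Lemma \ref{le-sosanhpolarlocus}) has no mass on $\widehat V$ since $Q_j$ has none there; and $[\widehat V]$ has no mass on the polar locus of $\gamma_{j,\min} - c_\delta\beta - \tfrac{\delta\epsilon}{2c_V}\{\widehat\omega\}$ because that class contains the closed positive current $Q_j - \tfrac{\delta\epsilon}{2c_V}\widehat\omega$, whose polar locus is contained in $I_{Q_j}$, which does not contain $\widehat V$, so the irreducible hypersurface $\widehat V$ meets the polar locus in a proper (hence Lebesgue-null in $\widehat V$) subset — and $[\widehat V]$ charges no such set. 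I expect the main obstacle to be the precise choice of convex-combination weights so that \emph{simultaneously} (a) $Q_j$ dominates $\tfrac{\delta\epsilon}{2c_V}\widehat\omega$, (b) the class is exactly $\gamma_{j,\min} - c_\delta\beta$ with a \emph{single} $c_\delta$ working for all $j$, and (c) the two-sided estimate \eqref{ine-uocluongcdelta} holds with constants independent of the data; getting the $\sigma^*\omega$-bookkeeping to collapse into the stated form with the universal constant $c$ is the delicate point, and it is essentially the same computation underlying \eqref{ine-wideharomegaeta}.
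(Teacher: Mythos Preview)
There is a genuine gap at the very first step. You write that $P_j$ and $T_{j,\min}$ ``both have minimal singularities'' and conclude that $\sigma^*P_j$ has generic Lelong number $\lambda_{T_{j,\min}}$ along $\widehat V$, so that $\sigma^*P_j=\lambda_{T_{j,\min}}[\widehat V]+P_j'$ with $P_j'\in\gamma_{j,\min}$ having \emph{zero} generic Lelong number along $\widehat V$. But $P_j$ is only a K\"ahler current with analytic singularities in $\alpha_j$; it is \emph{not} of minimal singularities, and in general $\lambda_{P_j}>\lambda_{T_{j,\min}}$. With this discrepancy, either your $P_j'$ is not positive, or it carries a fixed positive Lelong number along $\widehat V$ (so $\widehat V\subset I_{Q_j}$), or its class is $\gamma_{j,\min}-(\lambda_{P_j}-\lambda_{T_{j,\min}})\beta$ rather than $\gamma_{j,\min}$. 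In any of these cases the resulting $c_\delta$ picks up a \emph{constant} term $\lambda_{P_j}-\lambda_{T_{j,\min}}$ that does not vanish with $\delta$, and the upper bound $c_\delta\le\big(c\|\alpha_j\|+\tfrac{\epsilon}{2c_V}\big)\delta$ in \eqref{ine-uocluongcdelta} fails.

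The paper's remedy is precisely the missing idea: instead of using $P_j$ directly, one applies Demailly's analytic approximation to the K\"ahler current $(1-\delta)T_{j,\min}+\delta P_j$. This produces a K\"ahler current $P_{j,\delta}\in\alpha_j$ with analytic singularities, $P_{j,\delta}\ge\tfrac{\delta\epsilon}{2}\omega$, and---because $P_{j,\delta}$ is less singular than $(1-\delta)T_{j,\min}+\delta P_j$---its generic Lelong number along $V$ satisfies
\[
\lambda_{T_{j,\min}}\le\lambda_{P_{j,\delta}}\le\lambda_{T_{j,\min}}+\delta\big(\lambda_{P_j}-\lambda_{T_{j,\min}}\big).
\]
The excess $\lambda_{P_{j,\delta}}-\lambda_{T_{j,\min}}$ is now $O(\delta)$, and one sets $Q_j:=\eta_{j,\delta}+\tfrac{\delta\epsilon}{2c_V}\omega_h$ where $\sigma^*P_{j,\delta}=\lambda_{P_{j,\delta}}[\widehat V]+\eta_{j,\delta}$. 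This lands $Q_j$ in $\gamma_{j,\min}-c_\delta\beta$ with $c_\delta=(\lambda_{P_{j,\delta}}-\lambda_{T_{j,\min}})+\tfrac{\delta\epsilon}{2c_V}$, giving both sides of \eqref{ine-uocluongcdelta}. Your convex-combination idea is morally right, but it has to be carried out \emph{downstairs} on $T_{j,\min}$ and $P_j$ and then regularised---not upstairs on $P_j'$ alone---so that the analytic-singularity and the $O(\delta)$-Lelong-number conditions hold simultaneously. Once you have $P_{j,\delta}$, the rest of your argument (the ``in particular'' clauses via Lemma~\ref{le-sosanhpolarlocus}) goes through as you describe.
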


\proof  
Using Demailly's analytic approximation of currents (\cite{Demailly_analyticmethod}) applied to the K\"ahler current $(1-\delta)T_{j,\min}+ \delta P_j$ for $\delta \in (0,1)$, we obtain that for every $\delta \in (0,1)$, there exits a K\"ahler current $P_{j,\delta}$ with analytic singularities in the class $\alpha_j$ such that $P_{j,\delta}$ is less singular than $(1-\delta)T_{j,\min}+ \delta P_j$ and 
\begin{align}\label{ine-PJdelta}
P_{j,\delta} \ge \delta \epsilon \omega/2.
\end{align}
We deduce that 
\begin{align}\label{ine-sosanhPjdelta}
\lambda_{T_{j,\min}} \le \lambda_{P_{j,\delta}} \le \lambda_{T_{j,\min}}+ a_j \delta,
\end{align}
where $a_j:= \lambda_{P_j}- \lambda_{T_{j,\min}} \ge 0$.  Write 
$$\sigma^* P_{j,\delta}=  \lambda_{P_{j,\delta}} [\widehat V]+ \eta_{j,\delta}.$$
 Since $P_{j,\delta}$ has analytic singularities, so does $\eta_{j,\delta}$ and the polar locus of $\eta_{j,\delta}$ is an analytic subset of $X$ which doesn't contain $\widehat V$.  Hence, $[\widehat V]$ has no mass on the polar locus of $\eta_{j,\delta}$. 

Recall that by the choice of $\omega_h$, we have $\omega_h \in -\beta$. By (\ref{ine-PJdelta}) and (\ref{ine-wideharomegaeta}), we also get 
$$Q_j:=\eta_{j,\delta}+ \frac{\delta\epsilon}{2 c_V} \omega_h \ge \frac{\delta \epsilon}{2 c_V} \widehat \omega.$$
 The last current is in the class 
$$\gamma_{j,\min} - c_\delta \beta,$$
where 
$$c_\delta:= \lambda_{P_{j,\delta}} -\lambda_{T_{j,\min}}+ (\delta \epsilon)/(2c_V) \le \big(\lambda_{P_{j}} -\lambda_{T_{j,\min}}+ \epsilon/(2c_V)\big) \delta$$
by (\ref{ine-sosanhPjdelta}).  Since $P_{j}$ is a current in $\alpha_j$, we get  $\lambda_{P_{j}} \le c\|\alpha_j\|$ for some positive constant $c$ independent of $\alpha_j$. Hence, (\ref{ine-uocluongcdelta}) follows.

We have proved that there is a K\"ahler current with analytic singularities $Q_j$ in $\gamma_{j,\min}- c_\delta \beta$ such that $\widehat V \not \subset I_{Q_j}$. It follows that $Q_j$ has no mass on $\widehat V$. Using this and Lemma \ref{le-sosanhpolarlocus} yields that the currents with minimal singularities in $\gamma_{j,\min}- c_\delta \beta$ has no mass on $\widehat V$.
The last desired assertion is also immediate because the polar locus of $Q_j- \frac{\delta \epsilon}{2 c_V} \widehat \omega$ does not contain $\widehat V$. This finishes the proof.
\endproof

\begin{proof}[End of the proof of Theorem \ref{the-main}]   Let 
$$b_j:= \lambda_{T_j}- \lambda_{T_{j,\min}}\ge 0.$$
Note that $\gamma_j= \gamma_{j,\min}  - b_j \beta$. Suppose on contrary that  $b_j >0$ for every $j$.   Recall that we are assuming that $V$ is smooth. The case where $V$ is singular is dealt with later. 

Let $c_\delta$ be the constant associated to a number $\delta \in (0,1)$ as in Lemma \ref{le-dongkahlertronfeta}. Let $c$ be the constant appearing in  (\ref{ine-uocluongcdelta}).  Put 
$$\delta_j:= \big(c\|\alpha_j\|+ \frac{\epsilon}{2 c_V}\big)^{-1}b_j $$
for $1 \le j \le m$. Note that since $b_j \lesssim \|\alpha_j\|$, we can increase $c$ in order to have  $\delta_j \in (0,1)$.  By (\ref{ine-uocluongcdelta}), we get $c_{\delta_j} \le b_j$ for every $j$. Let $\gamma'_{j,\min}:= \gamma_{j,\min}  - c_{\delta_j} \beta$. By Lemma \ref{le-dongkahlertronfeta} and the fact that 
$$I_{\gamma'_{j,\min}- \gamma_{j}}= I_{(b_j-c_{\delta_j})\beta} \subset \widehat V,$$
we obtain that  the currents with minimal singularities in  $\gamma'_{m,\min}$ has no mass on $I_{\gamma'_{j,\min}- \gamma_{j}}$. This combined with Proposition \ref{pro-productclasss} $(iii)$ gives
\begin{align*}
\{ \langle \wedge_{j=1}^m \eta_j \rangle\}  \le \langle \wedge_{j=1}^{m-1}\gamma_j  \dot{\wedge} \gamma_m \rangle  \le \langle  \wedge_{j=1}^{m-1}\gamma_{j} \dot{\wedge}\gamma'_{m,\min} \rangle \le \langle  \wedge_{j=1}^{m-1}\gamma'_{j,\min} \dot{\wedge}\gamma'_{m,\min} \rangle.
\end{align*}
Using  the supper-additivity of products of classes (Proposition \ref{pro-productclasss} $(ii)$), we get
\begin{align*}
\langle  \wedge_{j=1}^{m-1}\gamma'_{j,\min} \dot{\wedge}\gamma'_{m,\min}  \rangle \le\langle  \wedge_{j=1}^{m-1}\gamma'_{j,\min} \dot{\wedge}\gamma_{m,\min} \rangle- c_{\delta_m} \langle  \wedge_{j=1}^{m-1}\gamma'_{j,\min} \dot{\wedge} \beta \rangle
\end{align*}
Let $I$ be the first term in the right-hand side in the last inequality. Recall that the currents with minimal singularities  in $\gamma_{m,\min}$ has no mass on $\widehat V$. The last set contains $I_{\beta}$. Hence, using Proposition \ref{pro-productclasss} $(iii)$ implies 
$$I \le \langle  \wedge_{j=1}^{m-1}\gamma_{j,\min} \dot{\wedge}\gamma_{m,\min} \rangle.$$
 Consequently, 
\begin{align*}
\langle  \wedge_{j=1}^{m-1}\gamma'_{j,\min} \dot{\wedge}\gamma'_{m,\min}  \rangle &\le  \langle  \wedge_{j=1}^{m-1}\gamma_{j,\min} \dot{\wedge}\gamma_{m,\min} \rangle- c_{\delta_m} \langle  \wedge_{j=1}^{m-1}\gamma'_{j,\min} \dot{\wedge} \beta \rangle\\
& \le\langle  \wedge_{j=1}^{m}\gamma_{j,\min} \rangle - c_{\delta_m} \langle  \wedge_{j=1}^{m-1}\gamma'_{j,\min} \dot{\wedge} [\widehat V] \rangle
\end{align*}
by Lemma \ref{le-sosanhetaSsphay}. Now let $\Phi$ be a closed smooth positive $(n-m,n-m)$-form on $X$. Put $M_j:= \frac{\delta_j \epsilon}{2 c_V}$. Note that by (\ref{ine-uocluongcdelta}), we get $M_j \le c_{\delta_j}$ for every $j$. Taking into account Lemma \ref{le-dongkahlertronfeta} and Proposition \ref{pro-productclasss} $(iii)$, we see that 
\begin{align*}
\int_{\widehat X}\langle  \wedge_{j=1}^{m-1}\gamma'_{j,\min} \dot{\wedge} [\widehat V] \rangle \wedge \sigma^* \Phi &\ge M_1 \cdots M_{m-1} \int_{\widehat V} \widehat \omega^{m-1} \wedge \sigma^* \Phi. 
\end{align*}
Consequently, we obtain
\begin{align} \label{ine-danhgiagancuiocungTjMcdelta}
\int_X \langle \wedge_{j=1}^m T_j \rangle  \wedge \Phi &= \int_{\widehat X}  \langle \wedge_{j=1}^m \eta_j \rangle \wedge \sigma^*\Phi\\
\nonumber
& \le  \int_{\widehat X}\langle  \wedge_{j=1}^{m}\gamma_{j,\min} \rangle \wedge \sigma^* \Phi - M_1 \cdots M_{m} \langle [\widehat V] \wedge \sigma^*\Phi, \widehat \omega^{m-1} \rangle
\end{align}
which is, by (\ref{eq-Tminclassgamma}), equal to  
$$\int_X \langle \wedge_{j=1}^m T_{j,\min} \rangle \wedge \Phi - M_1 \cdots M_{m} \langle [\widehat V] \wedge \sigma^*\Phi, \widehat \omega^{m-1} \rangle.$$
Using this and the hypothesis that
\begin{align}\label{eq-PhiTjTjmin}
\int_X \langle \wedge_{j=1}^m T_j \rangle  \wedge \Phi= \int_X \langle \wedge_{j=1}^m T_{j,\min} \rangle  \wedge \Phi,
\end{align}
we infer that $[ \widehat V] \wedge  \sigma^*\Phi =0$ for every closed smooth $(n-m,n-m)$-form $\Phi$. The last property means that $[V] \wedge \Phi =0$ for every closed smooth $(n-m,n-m)$-form $\Phi$. By choosing $\Phi:= \omega^{n-m}$, we obtain  a contradiction because $\dim V \ge n-m$. This finishes Step 1 of the proof.  We observe that we didn't fully use the assumption that $\{ \langle \wedge_{j=1}^m T_j \rangle \}=\{ \langle \wedge_{j=1}^m T_{j,\min} \rangle \}$. We only needed that there is a closed positive smooth $(n-m,n-m)$-form $\Phi$ on $X$ such that (\ref{eq-PhiTjTjmin}) holds and $[V]\wedge \Phi \not = 0$. We will use this remark in the next paragraph.

We now explain how to treat the case where $V$ is not necessarily smooth. By Hironaka's desingularization, there is $\sigma': X' \to X$ which is a composition of  consecutive blowups along smooth centers starting from $X$ so that the centers don't intersect the regular part of $V$ and the strict transform $V'$ of $V$ by $\sigma'$ is smooth.  Note that $V'$ is of the same dimension as $V$.

Let $T'_j:= \sigma'^* T_j$ and $\alpha'_j:= \sigma'^* \alpha_j$. One should note that  $T'_1, \ldots, T'_m$ might not be  of full mass intersection, however, we still have 
\begin{align} \label{eq-alphaphayphayPhigiaikidi}
\int_{X} \langle \wedge_{j=1}^m T'_j \rangle \wedge \sigma'^* \Phi = \int_{X} \langle \wedge_{j=1}^m\alpha_j \rangle \wedge \Phi =\int_{X'} \langle \wedge_{j=1}^m\alpha'_j \rangle \wedge \sigma'^* \Phi,
\end{align}
for every closed smooth $(n-m,n-m)$-form $\Phi$ on $X$. We will use $\Phi:= \omega^{n-m}$. Observe that
\begin{align*} 
[V'] \wedge \sigma'^* \Phi \not = 0
\end{align*}
because $\sigma'$ is a biholomorphism on an open Zariski set containing the regular part of $V$ and $[V] \wedge \Phi \not =0$ (here we use $\dim V \ge n-m$).  This together with (\ref{eq-alphaphayphayPhigiaikidi}) and the observation at the end of Step 1 allows us to apply Step 1 to $X',\alpha'_j$ and $T'_j$ to obtain that there exist an index $j_0$ such that 
$$\nu(T'_{j_0},V')= \nu(\alpha'_{j_0},V').$$
 On the other hand, by construction of $\sigma'$, we get $\nu(T'_j, V')= \nu(T_j, V)$ for every $j$, a similar property also holds for $T_{j,\min}$.  It follows that 
$$\nu(T_{j_0},V)= \nu(\alpha'_{j_0},V') \le \nu(T'_{j_0,\min}, V')= \nu(T_{j_0,\min}, V) \le \nu(T_{j_0},V).$$ 
Hence,  we get $\nu(T_{j_0,\min}, V) = \nu(T_{j_0},V)$.
This finishes the proof.
\end{proof}

We now present the proof of Theorem \ref{the-main3}.

\begin{proof}[Proof of Theorem \ref{the-main3}] Let $\omega$ be a fixed K\"ahler form on $X$.  Observe that by homogeneity, in order to prove the desired inequality, it suffices to consider  $\alpha_j/\|\alpha_j\|$ in place of $\alpha_j$. Hence, from now on, without loss of generality, we can assume that $\alpha_j \in \cali{B}_0\cap\cali{S}$, where $\cali{S}$ is the unit sphere in $H^{1,1}(X,\R)$. Since $\cali{B}_0$ is closed and contained in the big cone, we deduce that $\cali{B}_0\cap \cali{S}$ is compact in the big cone. It follows that  there exist a constant $\epsilon>0$ such that for every $\alpha \in \cali{B}_0 \cap \cali{S}$, there exists a current with analytic singularities $P \in \alpha$ such that $P \ge \epsilon \omega$. In particular, we obtain currents with analytic singularities $P_j \in \alpha_j$ such that $P_j \ge \epsilon \omega$ for $1 \le j \le m$.  

Now, we follow the arguments in the proof of Theorem \ref{the-main}. One only needs to review carefully the constants involving in  estimates used there.     Our submanifold $V$ is now the point set $\{x_0\}$. Let the notations be as in the proof of Theorem \ref{the-main}.   
By the construction of $\widehat X$,  the constant $c_V>0$ in (\ref{ine-wideharomegaeta}) can be chosen to be independent of $x_0$. As in the proof of Theorem \ref{the-main}, put 
$$b_{j}:= \nu(T_j, x_0)- \nu(\alpha_j, x_0), \quad \delta_{j}:= \big(c\|\alpha_j\|+ \frac{\epsilon}{2 c_V}\big)^{-1}b_{j}, \quad M_{j}:= \frac{\delta_{j} \epsilon}{2 c_V}$$ 
for $1 \le j \le n$, where $c$ is a constant big enough depending only on $X$ (and a fixed K\"ahler form $\omega$ on $X$ and a fixed norm on $H^{1,1}(X,\R)$).   Since $\alpha_1,\ldots, \alpha_n \in \cali{B}_0 \cap\cali{S}$, we get  
$$\delta_{j} \gtrsim b_{j},$$
and the constant $\epsilon$ can be chosen independent of $\alpha_1,\ldots, \alpha_n$.  Using   (\ref{ine-danhgiagancuiocungTjMcdelta}) for $\Phi$ to be the constant function equal to $1$ gives
$$\int_X \big(\langle \wedge_{j=1}^n\alpha_j \rangle- \{\langle \wedge_{j=1}^n T_j \rangle\}\big) \ge   M_1 \cdots M_n= \frac{\delta_1 \epsilon}{2 c_V}\cdots \frac{\delta_n \epsilon}{2 c_V} \gtrsim b_1 \cdots b_n.$$
The proof is finished. 
\end{proof}

\begin{example}\label{ex-themain3} Let $Y$ be a compact K\"ahler manifold and $\theta$ be a semi-positive $(1,1)$-form in $Y$ such that there is a current $P$ in $\{\theta\}$ with $\nu(P,x_0)>0$ for some $x_0 \in Y$ (one can take, for example, $Y$ to be the complex projective space and $\theta$ to be its Fubini-Study form).  Let  $X:= Y^2$ and $\alpha:= \pi_1^*\{\theta\}$ which is a semi-positive class, where $\pi_1: Y^2 \to Y$ is the projection to the first component. We have $\int_X \alpha^{2 \dim Y}=0$. Hence, $\alpha$ is not big. Let $\omega$ be a K\"ahler form on $X$. Let $\alpha_\epsilon:= \alpha+ \epsilon \{\omega\}$. We have $$\int_X\alpha_\epsilon^{2 \dim Y} \to \int_X \alpha^{2 \dim Y} =0.$$
Hence, if the constant $C$ in Theorem \ref{the-main3} were independent of $\cali{B}_0$, then   (\ref{ine-truonghopncaliB0}) for $x_0$ would hold for $\alpha_j:= \alpha_\epsilon$ and $T_j= \pi_1^* P$  for every $j$ for some constant $C$ independent of $\epsilon$. Letting $\epsilon \to 0$ gives a contradiction because the left-hand side converges to $0$, whereas the right-hand side converges to  a positive constant.
\end{example}

\begin{proof}[Proof of Corollary \ref{cor-knownbignef}] 
We explain how to obtain Corollary \ref{cor-knownbignef} from Corollary \ref{cor-main2}.  Let $\rho: X' \to X$ be a smooth modification of $X$ and $E$ an irreducible hypersurface in $X'$. Let $\varphi':= \varphi \circ \rho$, $\varphi'_{\alpha,\min}:= \varphi_{\alpha,\min} \circ \rho$,   $\theta':= \rho^* \theta$ and $\alpha':= \rho^* \alpha$.  Since non-pluripolar products have no mass on pluripolar sets, we have
$$\langle (\ddc \varphi'+ \theta')^n \rangle = \langle \alpha'^n \rangle = \langle \alpha^n \rangle>0,$$ 
and a similar equality also holds if $\varphi'$ is replaced by $\varphi'_{\alpha,\min}$ (note that we don't know if  the latter is a quasi-psh function with minimal singularities in $\alpha'$; anyway we will only need that $\varphi'_{\alpha,\min}$ is of full Monge-Amp\`ere mass in $\alpha'$). By a well-known result in \cite{Boucksom-volume}, the class $\alpha'$ is big.  

Applying Corollary \ref{cor-main2} to $\ddc \varphi'+ \theta'$ and $V:= E$, we obtain that the generic Lelong number of $\varphi'$ along $E$ is equal to $\nu(\alpha',E)$. We also get an analogous property for $\varphi'_{\alpha,\min}$ by applying  Corollary \ref{cor-main2} to $\ddc \varphi'_{\alpha,\min}+ \theta'$. It follows that the generic Lelong numbers of $\varphi'$ and $ \varphi'_{\alpha,\min}$ along $E$ are equal. Now using this property and \cite[Corollary 10.18]{Boucksom_l2} (or \cite[Theorem A]{Boucksom-Favre-Jonsson}) gives the desired assertion. The proof is finished. 
\end{proof}

We end the paper with the example mentioned in Introduction. 

\begin{example} \label{ex-optimallelong} Let $X:= \P^n$ and $[x_0:x_1: \cdots:x_n]$ the homogeneous coordinates. Let $\omega$ be the Fubini-Study form on $\P^n$. Let $2 \le m \le n$ be an integer.   Consider $$V:= \big\{[x_0: \cdots:x_n] \in \P^n: x_j=0, \quad 0 \le j \le m-1\big\},$$ and 
$$T_j:= \ddc (|x_0|^2+ \cdots+ |x_{m-1}|^2)= \ddc \frac{|x_0|^2+ \cdots+ |x_{m-1}|^2}{|x_0|^2+ \cdots+ |x_{n}|^2}+ \omega$$
for $1 \le j \le m-1$. We have $\dim V = n-m$. Put $T_m:= \omega$. Observe that the currents $T:=T_1 \wedge \cdots \wedge T_m$ and $T':= T_1 \wedge \cdots \wedge T_{m-1}$ are well-defined (classically) by \cite[Corollary 4.11, Page 156]{Demailly_ag}. Moreover since $V$ is of dimension $n-m$, and $T'$ is of bi-dimension $(n-m+1,n-m+1)$, we see that the trace measure of $T'$ has no mass on $V$ by \cite[Page 141]{Demailly_ag}. This combined with the fact that $T_m$ is smooth yields that the trace measure of $T$ also has no mass on $V$. Using this and the fact that $T_j$ is smooth outside $V$, we obtain 
 $$T= \langle T_1 \wedge \cdots \wedge T_m\rangle$$
 (both sides have no mass on $V$). It follows that $T_1,\ldots, T_m$ are of full mass intersection, but  $\nu(T_j,V)>0$ for $1 \le j \le m-1$, and $\nu(T_m, V)= 0$.       
\end{example}

\bibliography{biblio_family_MA,biblio_Viet_papers}
\bibliographystyle{siam}

\bigskip

\noindent
\Addresses
\end{document}